\documentclass{amsart}

\usepackage[T1]{fontenc}
\usepackage{enumerate, amsmath, amsfonts, amssymb, amsthm, thmtools, dsfont, mathrsfs, wasysym, graphics, graphicx, xcolor, url, hyperref, hypcap, xargs, multicol, pdflscape, multirow, hvfloat, array, ae, aecompl, pifont, mathtools, a4wide, float, blkarray, overpic, nicefrac, stmaryrd, anyfontsize, yfonts}
\usepackage{CJKutf8}
\usepackage{xargs, bbm, enumerate, paralist}
\usepackage[shortlabels, inline]{enumitem}
\usepackage{pdflscape}
\usepackage[noabbrev,capitalise]{cleveref}
\usepackage[normalem]{ulem}
\usepackage{marginnote}
\usepackage{animate}

\hypersetup{colorlinks=true, citecolor=darkblue, linkcolor=darkblue}
\usepackage[all]{xy}
\usepackage{tikz}
\usepackage{tikz-cd}
\usetikzlibrary{trees, decorations, decorations.pathmorphing, decorations.markings, decorations.shapes, shapes, arrows, matrix, calc, fit, intersections, patterns, angles}
\graphicspath{{figures/}{figures/diagonals/}{figures/walks/}{figures/tubes/}{figures/blocks/}}
\makeatletter\def\input@path{{figures/}}\makeatother
\usepackage{caption}
\captionsetup{width=\textwidth}
\usepackage[export]{adjustbox}

\usepackage{paralist}
\usepackage{shuffle}


\newtheorem{theorem}{Theorem}[section]
\newtheorem{corollary}[theorem]{Corollary}
\newtheorem{proposition}[theorem]{Proposition}
\newtheorem{lemma}[theorem]{Lemma}
\newtheorem{conjecture}[theorem]{Conjecture}
\newtheorem*{theorem*}{Theorem}

\theoremstyle{definition}
\newtheorem{definition}[theorem]{Definition}

\newtheorem{remark}[theorem]{Remark}

\crefname{equation}{Equation}{Equations}

\newcommand{\R}{\mathbb{R}} 

\renewcommand{\c}[1]{{\mathcal{#1}}} 
\renewcommand{\b}[1]{{\boldsymbol{#1}}} 
\newcommand{\go}[1]{{\textgoth{#1}}} 

\renewcommand{\epsilon}{\varepsilon} 

\newcommand{\set}[2]{\left\{ #1 \mid #2 \right\}} 
\newcommand{\ssm}{\smallsetminus} 
\newcommand{\dotprod}[2]{\langle #1 \mid #2 \rangle} 
\newcommand{\one}{{1\!\!1}} 
\newcommandx{\ones}[1][1=n]{\one_{#1}} 
\newcommand{\eqdef}{\mbox{\,\raisebox{0.2ex}{\scriptsize\ensuremath{\mathrm:}}\ensuremath{=}\,}} 

\DeclareMathOperator{\conv}{conv} 

\newcommand{\ie}{\textit{i.e.}~} 
\definecolor{darkblue}{rgb}{0,0,0.7} 
\definecolor{green}{RGB}{57,181,74} 
\definecolor{violet}{RGB}{147,39,143} 
\newcommand{\darkblue}{\color{darkblue}} 
\newcommand{\defn}[1]{\textsl{\darkblue #1}} 


\makeatletter
\def\part{\@startsection{part}{1}%
\z@{.7\linespacing\@plus\linespacing}{.8\linespacing}%
{\LARGE\sffamily\centering}}
\makeatother



\newcommand{\polytope}[1]{\mathsf{#1}}

\newcommandx{\PivotPolytope}[2][1=d,2=\t]{\polytope{\Pi}_{#2}^{#1}}
\newcommandx{\PP}{\polytope{\Pi}}

\newcommandx{\HypSimpl}[2][1=n,2=k]{\polytope{\Delta}(#1,#2)}

\newcommandx{\MPP}[2][1=\polytope{P},2=\b c]{\polytope{M}_{#2}(#1)}
\newcommandx{\MPPHypSimpl}[2][1=n,2=k]{\polytope{M}(#1,#2)}

\newcommandx{\gZono}[1][1=G]{\mathsf{Z}_{#1}}

\newcommandx{\Asso}[2][1=m,2={}]{\mathsf{Asso}^{#2}(#1)} 
\newcommandx{\dZono}[1][1=\b{h}]{\mathsf{D}_{#1}} 
\newcommand{\simplex}{\polytope{\Delta}} 

\newcommandx{\Fan}[1][1=F]{\mathcal{#1}} 
\newcommandx{\nestedFan}[1][1=\quiver]{\mathcal{F}(#1)} 

\newcommandx{\ray}[1][1=r]{\b{#1}} 
\newcommandx{\rays}[1][1=R]{\b{#1}} 
\newcommandx{\Perm}[1][1=n]{\polytope{Perm}_{#1}}

\newcommandx{\gArr}[1][1=G]{\mathcal{A}_{#1}} 
\newcommandx{\gFan}[1][1=G]{\Fam_{#1}} 
\newcommandx{\gFanO}[1][1=G]{\widehat{\Fan}_{#1}} 
\newcommandx{\cc}[1][1=G]{\mathbb{K}_{#1}} 

\newcommandx{\braid}[1][1=n]{\mathcal{B}_{#1}} 
\newcommandx{\sbraid}[1][1=n]{\widehat{\mathcal{B}}_{#1}} 

\newcommandx{\coefficient}[3][1={\b{s}}, 2=\b{r}, 3=\b{r}']{\alpha_{#2,#3}(#1)} 
\newcommandx{\virtualPolytopes}[1][1=d]{\mathbb{V}^{#1}} 
\newcommandx{\VDP}[1][1=n]{\mathbb{VDP}^{#1}} 
\newcommandx{\CVDP}[1][1=n]{\overrightarrow{\mathbb{VDP}}^{#1}} 
\newcommand{\VD}[1][1=n]{\mathbb{VD}} 
\newcommandx{\opcone}[1][1={\mu,\omega}]{\polytope{C}_{#1}}
\newcommandx{\orcone}[1][1={\omega}]{\polytope{C}_{#1}}

\newcommandx{\graphG}[1][1=G]{#1} 
\newcommandx{\hypergraph}[1][1=H]{\graphG[#1]} 
\newcommandx{\tube}[1][1=t]{\mathsf{#1}} 
\newcommandx{\tubes}[1][1=\graphG]{\building#1} 
\newcommandx{\tubing}[1][1=T]{\mathsf{#1}} 

\newcommand{\building}{\mathcal{B}} 
\newcommandx{\nested}[1][1=N]{\mathcal{#1}} 


\newcommandx{\enhancedStep}[3][1=i, 2=j, 3=a]{#1 \xrightarrow{#3} #2}
\newcommandx{\step}[2][1=i, 2=j]{#1 \rightarrow #2}
\newcommandx{\enhancedStepx}[3][1=x, 2=y, 3=z]{#1 \xrightarrow{#3} #2}
\newcommandx{\enhancedStepZ}[3][1=x, 2=y, 3=Z]{#1 \xrightarrow{#3} #2}

%

%
%
%
\renewcommand\t{\b t}%


%
%

\newcommandx{\ConstrainedMALIntrinsic}[2][1=n,2=d]{\go M_{#1,#2}}

\newcommandx{\AssGraph}[2][1=n,2=3]{\mathit{Asso}_{#1}^{#2}}


%
%

\newcommandx{\Fpolytope}[3][1=d,2=A,3=\b t]{\polytope{P}_{#1}^f\left(#2,#3 \right)}
\newcommandx{\Bpolytope}[3][1=d,2=A,3=\b t]{\polytope{P}_{#1}^b \left(#2,#3 \right)}

\newcommandx{\FibPol}[3][1=\polytope{P},2=\polytope{Q},3=\pi]{\polytope{\Sigma}_{#3}(#1,#2)}

\newcommandx{\FibPolCyc}[2][1=d,2=\t]{\polytope{\Sigma}^{#1}_{2}(#2)}
\newcommandx{\PolProj}[3][1=\polytope{P},2=\polytope{Q},3=\pi]{#3~:~#1\to #2}

\newcommandx{\HOmega}[3][1=\kappa,2=\t,3=d]{\Omega_{\kappa}^d(\t)}
\newcommandx{\rHOmega}[3][1=\kappa,2=\t,3=d]{\overline{\Omega}_{\kappa}^d(\t)}
\newcommandx{\Ppolytope}[3][1=d,2=T,3=\t]{\polytope{Q}^+_{#1}(#2,#3)}
\newcommandx{\Npolytope}[3][1=d,2=T,3=\t]{\polytope{Q}^-_{#1}(#2,#3)}

\newcommandx{\gVdM}[3][1=n,2=k,3=\b \lambda]{\text{VdM}_{#1,#2}(#3)}
\newcommandx{\VdM}[2][1=n,2=\b\lambda]{\text{VdM}_{#1}(#2)}

%
%

\newcommandx{\PivotFan}[2][1=\polytope{P},2=\b c]{\c P_{#1, #2}}
\newcommand{\SlopeMap}{\theta}
\newcommand{\LodayFan}[1][m]{\c L_{#1}}
\newcommand{\ProductSlopeMap}{\Theta}

\usepackage{todonotes}

\title[Pivot polytopes of products of simplices and shuffles of associahedra]{Pivot polytopes of products of simplices \\ and shuffles of associahedra}

\thanks{
VP was partially supported by the French project CHARMS (ANR~19\,CE40\,0017), by the French\,--\,Austrian project PAGCAP (ANR~21\,CE48\,0020 \& FWF I 5788), by the Spanish projects PID2019-106188GB-I00 and PID2022-137283NB-C21 of MCIN/AEI/10.13039/501100011033, by the Severo Ochoa and María de Maeztu Program for Centers and Units of Excellence in R\&D (CEX2020-001084-M), and by the Departament de Recerca i Universitats de la Generalitat de Catalunya (2021 SGR 00697).
}

\author{Vincent Pilaud}
\address[V.~Pilaud]{Universitat de Barcelona \& Centre de Recerca Matemàtica, Barcelona}
\email{vincent.pilaud@ub.edu}
\urladdr{\url{https://www.ub.edu/comb/vincentpilaud/}}

\author{Germain Poullot}
\address[G.~Poullot]{Universität Osnabrück, Germany}
\email{germain.poullot@uni-osnabrueck.de}

\begin{document}

\begin{abstract}
We provide a piecewise linear isomorphism from the normal fan of the pivot polytope of a product of simplices to the normal fan of a shuffle of associahedra.
\end{abstract}

\vspace*{-2cm}

\maketitle

\section{Introduction}

To solve a linear program given by a polytope~$\polytope{P}$ and a direction~$\b c$, the simplex algorithm traverses a path along the graph of~$\polytope{P}$ from any given vertex to a maximal vertex, choosing for each vertex an improving neighbor according to a given pivot rule.
The pivot rule is memoryless when the choice only depends on the current vertex, which can be encoded by an arborescence mapping each vertex to its preferred neighbor.
The classical shadow-vertex pivot rule, instrumental for randomized and smoothed analysis of the simplex method~\cite{Borgwardt,SpielmanTeng}, is not memoryless.
To make the shadow-vertex pivot rule memoryless,
A.~Black, J.~De Loera, N.~L\"utjeharms and R.~Sanyal defined in~\cite{BlackDeLoeraLutjeharmsSanyal} the max-slope pivot rule with respect to a given fixed generic weight~$\b\omega$, which chooses the improving neighbor maximizing the slope on the plane defined by~$\b c$ and~$\b\omega$.
They also introduced the max-slope pivot rule polytope (that we abbreviate here by pivot polytope), whose vertices are in bijection to the arborescences of the max-slope pivot rule on~$(\polytope{P}, \b c)$.
They observed that the pivot polytope of a cube is the standard permutahedron, that the pivot polytope of a simplex is an associahedron, and that the pivot polytope of a prism over a simplex is a multiplihedron (these observations were latter proved in~\cite{BlackLutjeharmsSanyal}).
\cref{fig:examples} illustrates these miracles in dimension $2$.
Based on enumerative data, V.~Pilaud and R.~Sanyal further conjectured~\cite{PilaudSanyal} that the pivot polytope of a product of two simplices is a constrainahedron~\cite{BottmanPoliakova}.
As multiplihedra and constrainahedra are both obtained from associahedra by the shuffle operation of F.~Chapoton and V.~Pilaud~\cite{ChapotonPilaud-shuffle}, it naturally led to the following conjecture.

\begin{figure}[t]
	\centerline{
		\includegraphics[scale=.7]{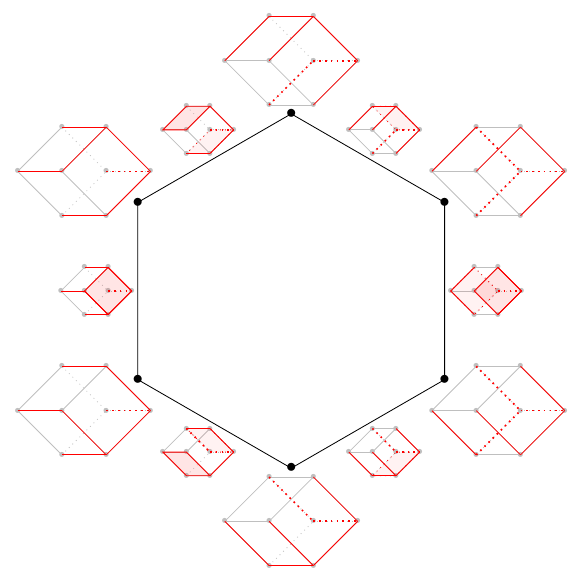}
		\quad
		\raisebox{.3cm}{\includegraphics[scale=.7]{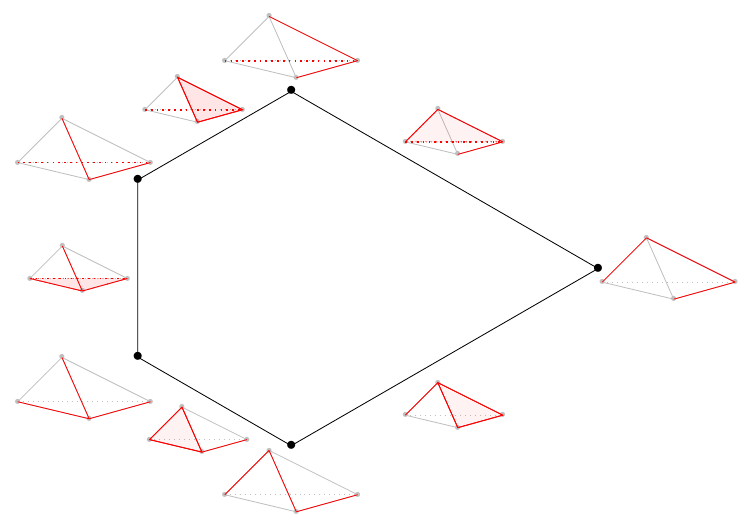}}
	}
	\caption{The arborescences of the max-slope pivot rule over the $3$-dimensional cube (left) and simplex (right) correspond to the faces of the $2$-dimensional permutahedron (left) and associahedron (right). The objective vector~$\b c$ points to the right.}
	\label{fig:examples}
\end{figure}

\begin{conjecture}[\cite{PilaudSanyal}]
\label{conj:main}
For any~$m_1 \ge 1, \dots, m_t \ge 1$, the pivot polytope of the product of simpli\-ces~${\simplex_{m_1} \times \dots \times \simplex_{m_t}}$ and the shuffle of Loday's associahedra~$\Asso[m_1] \star \dots \star \Asso[m_t]$ are combinatorially isomorphic (meaning that they have isomorphic face lattices).
\end{conjecture}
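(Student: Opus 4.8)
\medskip
\noindent\emph{Strategy of proof.} Since the faces of a polytope are in inclusion-reversing bijection with the cones of its normal fan, it suffices --- and this is the route announced in the abstract --- to exhibit a piecewise linear isomorphism from the normal fan of the pivot polytope $\MPP[\simplex_{m_1} \times \dots \times \simplex_{m_t}]$ onto the normal fan of the shuffle $\Asso[m_1] \star \dots \star \Asso[m_t]$. The first step is to unwind the Minkowski sum presentation of the pivot polytope of a product: a vertex of $\simplex_{m_1} \times \dots \times \simplex_{m_t}$ is a tuple $\b v = (v^{(1)}, \dots, v^{(t)})$, and its $\b c$-improving neighbours are exactly the tuples obtained by replacing a single $v^{(k)}$ by a $\b c$-higher vertex of $\simplex_{m_k}$; hence the local Minkowski summand of the pivot polytope at $\b v$ is the join, inside the product of the ambient spaces and with empty factors dropped, of the local summands of the individual pivot polytopes at the $v^{(k)}$. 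By the identification $\MPP[\simplex_{m}] = \Asso[m]$ of \cite{BlackLutjeharmsSanyal}, these local pieces are precisely the Minkowski summands appearing in Loday's associahedra, so the fan we must understand is the common refinement, inside the product of the weight spaces, of the normal fans of all these joins.

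The second step is the combinatorial model. The key observation is that the slope of an improving edge at $\b v$ moving in factor $k$ depends only on $v^{(k)}$ and on the $k$-th block $\b\omega^{(k)}$ of the weight, never on the other coordinates; therefore, for generic $\b\omega$, the max-slope arborescence on the product is entirely determined by the $t$ individual max-slope arborescences $A^{(k)}$ on $(\simplex_{m_k}, \b c^{(k)})$ --- equivalently, by a choice of $t$ vertices of $\Asso[m_1], \dots, \Asso[m_t]$ --- together with the total order in which the per-factor optimal slopes $\sigma_k(v^{(k)})$ interleave as $\b v$ ranges over the product. This datum ``a tuple of trees plus a shuffle of their slope sequences'' is exactly the combinatorial gadget (encoded in this paper by tubings of suitable building sets) that indexes the vertices of the Chapoton--Pilaud shuffle $\Asso[m_1] \star \dots \star \Asso[m_t]$; keeping track of which slope comparisons become tight --- an internal flip inside some $A^{(k)}$, or a transposition of two consecutive slopes in the shuffle --- extends the bijection to all faces, and already shows that the two face lattices are abstractly isomorphic.

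The third step promotes this bijection to a piecewise linear isomorphism of fans via the slope maps $\SlopeMap$ and $\ProductSlopeMap$. For a single simplex, the map $\SlopeMap$ sending a weight to the vector $\bigl( \sigma(v_0), \dots, \sigma(v_{m-1}) \bigr)$ of best slopes available at the non-maximal vertices is piecewise linear, each coordinate being a maximum of linear functionals of the weight; I would check that $\SlopeMap$ restricts to a linear isomorphism from each maximal cone of the pivot fan of $\simplex_m$ onto the corresponding maximal cone of the Loday fan $\LodayFan$, so that it is a piecewise linear isomorphism of fans. Assembling the $t$ maps $\SlopeMap$ together with the coordinates recording the interleaving order yields the product slope map $\ProductSlopeMap$, which I claim is a piecewise linear homeomorphism from the weight space onto the ambient space of the normal fan of the shuffle (modulo the common lineality), carrying the pivot fan onto the shuffle fan cone by cone; as both fans are complete, matching the full-dimensional cones suffices.

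The step I expect to be the main obstacle is the third one: showing that $\ProductSlopeMap$ is a genuine piecewise linear isomorphism, and not merely a cellwise linear map compatible with the combinatorics. Concretely, one must prove that a generic weight can be recovered from its slope data (injectivity), that every combinatorially admissible slope datum is attained (surjectivity onto the support of the shuffle fan), and --- most delicately --- that the linear pieces of $\ProductSlopeMap$ on two cones sharing a wall agree along that wall, so that cones of codimension one and higher are matched with the correct linear structure. This is where the forward/backward dichotomy and the auxiliary polytopes $\Fpolytope$ and $\Bpolytope$ come in: a wall of the pivot fan is either a ``tree flip'' internal to a single factor or a ``shuffle flip'' exchanging two adjacent slopes, the two cases calling for separate (forward, respectively backward) arguments, and in each case one must verify that $\ProductSlopeMap$ sends the wall to the matching wall of the shuffle fan with equal linear data on both sides.
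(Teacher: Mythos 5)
Your overall strategy is the paper's: send a weight~$\b\omega$ to its vector of best slopes, show this map is piecewise linear and restricts to a linear bijection on each maximal cone, and conclude by a global fan argument. That much is right. But several parts of your outline are off, and one of them conceals the actual content of the proof.

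First, Step~1 (the Minkowski-sum/join decomposition of the pivot polytope of a product) plays no role in the paper; the argument never touches Minkowski summands and lives entirely at the level of normal fans. Second, the product slope map is \emph{not} ``the $t$ slope maps together with coordinates recording the interleaving order''. It is simply the concatenation $\ProductSlopeMap(\b\omega_1,\dots,\b\omega_t)=\bigl(\SlopeMap_1(\b\omega_1),\dots,\SlopeMap_t(\b\omega_t)\bigr)\in\R^{m_1}\times\dots\times\R^{m_t}=\R^m$; the interleaving is encoded automatically in the relative order of those $m$ coordinates. Appending explicit interleaving coordinates would overshoot the target dimension $m$ and make a piecewise linear isomorphism impossible. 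Third, the step you single out as the main obstacle --- wall-by-wall agreement of the linear pieces, to be handled by a forward/backward dichotomy and the auxiliary polytopes $\Fpolytope$ and $\Bpolytope$ --- is not where the difficulty lies, and those auxiliary polytopes do not appear anywhere in the paper's argument (the macros exist in the preamble but are never used). Since each coordinate $\tau^{\b\omega}(\b u)$ is a maximum of linear functionals of $\b\omega$, the slope map is continuous, so agreement across walls is automatic; the paper then closes by observing that the images of the maximal cones are $m$-dimensional, have disjoint interiors, are each contained in a distinct maximal cone of the complete target fan, and therefore --- both fans being complete of the same dimension --- must coincide with those cones.

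What your sketch omits, and what actually carries the proof, is the combinatorial lemma relating slope orders to arborescences. For a polytope with complete graph one shows that $\c A^{\b\omega}(i)=\min\{j : i<j \text{ and } \tau^{\b\omega}(i)>\tau^{\b\omega}(j)\}$, using the fact that $\rho^{\b\omega}(i,k)$ is a strict convex combination of $\rho^{\b\omega}(i,j)$ and $\rho^{\b\omega}(j,k)$ for $i<j<k$; and one then proves that $\c A^{\b\omega}=\c A^{\b\omega'}$ if and only if the permutations sorting $\SlopeMap(\b\omega)$ and $\SlopeMap(\b\omega')$ are congruent under the sylvester congruence (for one simplex), respectively the $(m_1,\dots,m_t)$-sylvester congruence (for a product). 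Your ``tubings of building sets'' are not the paper's model; the paper works directly with classes of permutations under these congruences, which are exactly the maximal cones of the Loday fan and of its shuffle analogue. Without this lemma the identification of the image cones with the sylvester cones is unsupported, so the proposal as written has a genuine gap at its center.
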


Partial cases of this conjecture were solved in~\cite{BlackDeLoeraLutjeharmsSanyal,BlackLutjeharmsSanyal}, namely when~${m_1 \! = \! \dots \! = \! m_t \! = \! 1}$ (permutahedron~\cite[Thm.~6.5]{BlackDeLoeraLutjeharmsSanyal}), when~$t = 1$ (associahedron~\cite[Thm.~4.3]{BlackLutjeharmsSanyal}), when~${t = 2}$ (constrainahedron~\cite[Thm.~5.8]{BlackLutjeharmsSanyal}), and the vertex count when~$m_1 = \dots = m_{t-1} = 1$ (multiplihedron~\cite[Thm.~6.2]{BlackLutjeharmsSanyal}).
These results were achieved using the connection to particle collisions~\cite{BottmanPoliakova}, motivated by the case of constrainahedra.

This paper reports on an alternative approach to pivot polytopes of product of simplices, developed independently from~\cite{BlackLutjeharmsSanyal} and originally announced in~\cite{Poullot}.
This approach closes \cref{conj:main} and provides arguably simpler proofs even for the known cases of the associahedron and constrainahedron.
Our main idea is to define the slope map, which sends each weight~$\b\omega$ to its slope vector, recording the optimal slope at each vertex.
As the graph of the simplex is complete, the arborescence for~$\b\omega$ can be directly retrieved from the order of these optimal slopes, hence from the region of the braid arrangement containing the slope vector of~$\b\omega$.
Studying the regions giving the same arborescence naturally leads to our first result, which refines \cite[Thm.~4.3]{BlackLutjeharmsSanyal}.

\begin{theorem}
\label{thm:main1}
For any full dimensional simplex~$\simplex \subset \R^m$ and any generic direction~$\b c \in \R^m$, the slope map is a piecewise linear isomorphism from the normal fan of the pivot polytope of~$(\simplex, \b c)$ to the normal fan of the associahedron~$\Asso[m]$.
\end{theorem}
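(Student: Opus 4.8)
The plan is to make the slope map completely explicit and show that on each arborescence region it is a \emph{fixed} linear isomorphism, so that the theorem reduces to understanding how the arborescence regions correspond to the chambers of a braid arrangement. I first fix coordinates: label the vertices $v_0,\dots,v_m$ of $\simplex$ so that $\langle\b c,v_0\rangle<\dots<\langle\b c,v_m\rangle$, and for $0\le i<j\le m$ put $c_{ij}\eqdef\langle\b c,v_j-v_i\rangle>0$ and $\tau_{ij}(\b\omega)\eqdef\langle\b\omega,v_j-v_i\rangle/c_{ij}$ (the slope of the edge $v_iv_j$ for the weight $\b\omega$). As the graph of $\simplex$ is complete, the max-slope arborescence is $\c A_{\b\omega}(v_i)=v_p$ with $p=\argmax_{j>i}\tau_{ij}(\b\omega)$, and the slope map is $\SlopeMap(\b\omega)\eqdef(s_0(\b\omega),\dots,s_{m-1}(\b\omega))$ with $s_i(\b\omega)\eqdef\max_{j>i}\tau_{ij}(\b\omega)$. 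Being a maximum of linear forms, $\SlopeMap$ is continuous, positively homogeneous and piecewise linear, with $\SlopeMap(\b c)=\One$ and $\SlopeMap(\b\omega+t\b c)=\SlopeMap(\b\omega)+t\One$; and on the normal cone $C_{\c A}\eqdef\overline{\{\b\omega:\c A_{\b\omega}=\c A\}}$ of the pivot polytope at the vertex indexed by $\c A$ (see \cite{BlackDeLoeraLutjeharmsSanyal}) it agrees, by continuity, with the linear map $L_{\c A}\colon\b\omega\mapsto\bigl(\langle\b\omega,v_{\c A(i)}-v_i\rangle/c_{i,\c A(i)}\bigr)_{i<m}$, where $\c A(i)$ denotes the index with $\c A(v_i)=v_{\c A(i)}$. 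The edges of $\c A$ form a spanning tree of the complete graph on $\{v_0,\dots,v_m\}$, so the vectors $\{v_{\c A(i)}-v_i\}_{i<m}$ are a basis of $\R^m$ (here $\simplex$ is full dimensional), whence $L_{\c A}$ is a linear automorphism of $\R^m$ sending $\b c$ to $\One$. Thus $\SlopeMap$ maps each maximal cone $C_{\c A}$ of the normal fan of the pivot polytope linear-isomorphically onto a full-dimensional cone, and carries the lineality line $\R\b c$ onto $\R\One$.

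Next I would establish the combinatorial core: for generic $\b\omega$, the arborescence reads off the ``next smaller slope to the right'', that is $\c A_{\b\omega}(v_i)=v_p$ with $p=\min\{k\in\{i+1,\dots,m\}:s_k(\b\omega)<s_i(\b\omega)\}$ under the convention $s_m\eqdef-\infty$. This is a short direct argument from the mediant identity $\tau_{ik}=\tfrac{c_{ij}}{c_{ik}}\tau_{ij}+\tfrac{c_{jk}}{c_{ik}}\tau_{jk}$ for $i<j<k$, which forces $\tau_{ik}$ strictly between $\tau_{ij}$ and $\tau_{jk}$: writing $p\eqdef\argmax_{j>i}\tau_{ij}(\b\omega)$, for each $i<k<p$ the identity applied to $\tau_{ip}$ gives $\tau_{kp}>\tau_{ip}=s_i$, hence $s_k\ge\tau_{kp}>s_i$; and when $p<m$, applying it to $\tau_{i,\c A(v_p)}$ together with the maximality of $\tau_{ip}$ among the $\tau_{ij}$ yields $s_p=\tau_{p,\c A(v_p)}<s_i$. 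In particular $\c A_{\b\omega}$ depends only on the linear order of $s_0(\b\omega),\dots,s_{m-1}(\b\omega)$, hence only on the chamber of the braid arrangement in the slope space $\R^m$ containing $\SlopeMap(\b\omega)$; this yields a surjection $\Phi$ from those chambers onto the set of arborescences.

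Then I would identify the fibers of $\Phi$ with the sylvester classes --- directly, or by recognizing the ``next smaller to the right'' functions as the classical encoding of sylvester classes and binary search trees --- so that $\bigcup_{\Phi(\mathrm{ch})=\c A}\overline{\mathrm{ch}}$ is a maximal cone $K_{\c A}$ of the normal fan of Loday's associahedron $\Asso[m]$, with $\c A\mapsto K_{\c A}$ a bijection onto its maximal cones. It follows that $C_{\c A}=\SlopeMap^{-1}(K_{\c A})$ cone by cone, so the normal fan of the pivot polytope is the $\SlopeMap$-preimage of the normal fan of $\Asso[m]$; in particular the facets of $C_{\c A}$ are the $\SlopeMap$-preimages of the facets of $K_{\c A}$. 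Since $\SlopeMap|_{C_{\c A}}=L_{\c A}$ is linear, it sends each facet of $C_{\c A}$ into the supporting hyperplane of the corresponding facet of $K_{\c A}$, so the full-dimensional cone $L_{\c A}(C_{\c A})\subseteq K_{\c A}$ is cut out by the very hyperplanes defining $K_{\c A}$ and hence equals $K_{\c A}$. Therefore $\SlopeMap$ is onto; and it is injective, because for a cone $\tau$ of the associahedron fan contained in $K_{\c A}$ the set $\SlopeMap^{-1}(\operatorname{relint}\tau)$ is the corresponding face of $C_{\c A}$, on which $\SlopeMap=L_{\c A}$ is injective. So $\SlopeMap$ is a homeomorphism taking the cones of the pivot fan bijectively to the cones of the associahedron fan, linearly and with linear inverse on each cone --- the desired piecewise linear isomorphism.

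The main obstacle is conceptual rather than technical: finding the slope-map viewpoint and the ``next smaller slope to the right'' rule. Granting those, the proof rests on two standard facts --- that these functions index the sylvester classes, and that the sylvester coarsening of the braid fan is the normal fan of $\Asso[m]$ --- together with the elementary fan bookkeeping above. The one delicate point is matching the ``next smaller to the right'' equivalence on orderings with the correct variant of the sylvester congruence: which variant occurs is governed by the orientation that $\b c$ induces on the vertices of $\simplex$, and handling this correctly is what produces $\Asso[m]$ rather than its mirror image.
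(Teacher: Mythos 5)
Your proposal matches the paper's proof in all essentials: the slope map $\SlopeMap(\b\omega) = (\tau^{\b\omega}(i))_{i\in[m]}$, its linearity and injectivity on each arborescence cone (because the arborescence edges form a spanning tree, hence a basis --- this is \cref{lem:injective1}), the ``next-smaller-slope-to-the-right'' formula for $\c A^{\b\omega}$ proved via the mediant convex combination of slopes (\cref{lem:formulaACompleteGraph1}), and the identification of the fibers with sylvester classes (\cref{lem:sylvester1}). The only difference is in the final bookkeeping: you argue cone by cone that $L_{\c A}(C_{\c A})$ is cut out by the facet hyperplanes of $K_{\c A}$ and hence equals it, whereas the proof of \cref{thm:PivotFanSimplex} argues globally that the $\SlopeMap$-images of the maximal cones form a complete fan refining $\LodayFan$ injectively, hence coincide with it --- both are standard and correct ways to close the argument once the three lemmas are in place.
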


Beyond the case of simplices for which it provides a proof of \cref{thm:main1}, the slope map connects pivot polytopes to deformed permutahedra (or generalized permutahedra~\cite{Postnikov}).
More precisely, it always embeds the pivot fan inside the braid fan.
This perspective opens several research directions discussed in~\cite{Poullot}.

We then exploit the fact that a product of polytopes contains many parallel edges to define a product slope map, which sends each weight~$\b\omega$ to a vector recording irredundantly the optimal slope at each vertex.
This map enables us to prove the following refinement of \cref{conj:main}.

\begin{theorem}
\label{thm:main2}
For any full dimensional simplices~$\simplex_1 \subset \R^{m_1}, \dots, \simplex_t \subset \R^{m_t}$ and any generic direction~$\b c \in \R^{m_1 + \dots + m_t}$, the product slope map is a piecewise linear isomorphism from the normal fan of the pivot polytope of~${(\simplex_1 \times \dots \times \simplex_t, \b c)}$ to the normal fan of the shuffle of associahedra~${\Asso[m_1] \star \dots \star \Asso[m_t]}$.
\end{theorem}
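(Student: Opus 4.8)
The plan is to reduce the product case to the single-simplex case of \cref{thm:main1}, exploiting that a product of polytopes has only \emph{axis-parallel} edges. Set $\polytope P \eqdef \simplex_1 \times \dots \times \simplex_t$ and split every weight as $\b\omega = (\b\omega_1, \dots, \b\omega_t)$ with $\b\omega_i \in \R^{m_i}$; for $\b c = (\b c_1, \dots, \b c_t)$ generic, each $\b c_i$ is a generic direction for $\simplex_i$. A vertex of $\polytope P$ is a tuple $v = (v_1, \dots, v_t)$ of vertices of the $\simplex_i$, and every edge at $v$ moves a single coordinate $v_i$ along an edge of $\simplex_i$ while freezing the others. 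Crucially, the slope of such an edge on the plane spanned by $\b c$ and $\b\omega$ depends only on $(\simplex_i, \b c_i, \b\omega_i)$ and on $v_i$, not on the remaining blocks. Hence the max-slope pivot rule at $v$ first selects the block $i$ maximizing the optimal slope at $v_i$ in $(\simplex_i, \b c_i)$ for the weight $\b\omega_i$ --- over all blocks in which $v_i$ is not the $\b c_i$-maximal vertex of $\simplex_i$ --- and then follows, inside that block, the arborescence edge of $(\simplex_i, \b c_i, \b\omega_i)$ at $v_i$. This is precisely the redundancy that the product slope map removes: $\ProductSlopeMap(\b\omega)$ is the concatenation $\bigl(\SlopeMap_1(\b\omega_1), \dots, \SlopeMap_t(\b\omega_t)\bigr) \in \R^{m_1} \times \dots \times \R^{m_t}$, where $\SlopeMap_i$ is the slope map of $(\simplex_i, \b c_i)$ and its entries are exactly these optimal slopes.

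First I would pin down the pivot fan $\PivotFan$ of $(\polytope P, \b c)$ in terms of the factors. From the description above, two generic weights give the same arborescence on $\polytope P$ --- equivalently, lie in the same maximal cone of $\PivotFan$ --- if and only if (a)~for every $i$ they lie in the same maximal cone of $\PivotFan[\simplex_i][\b c_i]$, and (b)~for every two blocks $i \ne j$, every non-$\b c_i$-maximal $v_i \in \simplex_i$, and every non-$\b c_j$-maximal $v_j \in \simplex_j$, the optimal slopes at $v_i$ and at $v_j$ are ordered the same way. The forward direction is immediate. For the converse, a change of the within-block arborescence at some $v_i$ is witnessed at the vertex of $\polytope P$ having $v_i$ in block $i$ and the $\b c$-maximal vertex in every other block --- there the pivot rule can only move in block $i$, along the corresponding $\simplex_i$-arborescence edge --- whereas a flip of the comparison in~(b) is witnessed at the vertex having $v_i$ in block $i$, $v_j$ in block $j$, and $\b c$-maximal vertices elsewhere, since there only blocks $i$ and $j$ offer an improving edge and the block chosen by the pivot rule therefore switches. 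Since on each cone of the product fan every such optimal slope is a linear function of $\b\omega$, the equalities in~(b) restrict to genuine hyperplanes, and $\PivotFan$ is the common refinement of the product fan $\PivotFan[\simplex_1][\b c_1] \times \dots \times \PivotFan[\simplex_t][\b c_t]$ with the (piecewise linear) arrangement cutting out ``optimal slope at $v_i$ equals optimal slope at $v_j$'', over all $i \ne j$ and all admissible $v_i, v_j$.

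Next I would push this forward along $\ProductSlopeMap$. Being the block-diagonal product of the piecewise linear isomorphisms $\SlopeMap_i$ supplied by \cref{thm:main1}, the map $\ProductSlopeMap$ is a bijection of $\R^{m_1 + \dots + m_t}$ that is linear on each maximal cone of $\PivotFan$, hence a piecewise linear isomorphism onto its image fan. Since each $\SlopeMap_i$ carries $\PivotFan[\simplex_i][\b c_i]$ onto the normal fan of $\Asso[m_i]$, the map $\ProductSlopeMap$ carries the product fan onto the normal fan of $\Asso[m_1] \times \dots \times \Asso[m_t]$. Moreover, as the optimal slope at $v_i$ is literally an entry of $\SlopeMap_i(\b\omega_i)$, the map $\ProductSlopeMap$ sends each hypersurface of~(b) to the honest hyperplane $\{x_k = x_\ell\}$ with $k$ in block $i$ and $\ell$ in block $j$ --- that is, to a wall of the ``cross-block'' sub-arrangement of the braid arrangement $\braid[m_1 + \dots + m_t]$. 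Therefore $\ProductSlopeMap$ maps $\PivotFan$ onto the common refinement of the normal fan of $\Asso[m_1] \times \dots \times \Asso[m_t]$ with that cross-block sub-arrangement.

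It remains to recognize this common refinement as the normal fan of the shuffle $\Asso[m_1] \star \dots \star \Asso[m_t]$: this is exactly how the shuffle of deformed permutahedra of~\cite{ChapotonPilaud-shuffle} behaves at the level of normal fans --- within each block one keeps the normal fan of the corresponding factor, and across distinct blocks one refines by the full braid arrangement --- so invoking that identification (recalled in the earlier sections) completes the proof. I expect the real work to lie in the two middle steps: verifying that the walls of $\PivotFan$ are \emph{exactly} the within-block walls together with the cross-block slope-comparison hypersurfaces (creating no spurious walls and collapsing none), for which the ``freeze all other blocks at their $\b c$-maximal vertex'' witnesses are the key device; and making the passage from this common refinement of fans to the combinatorial description of $\Asso[m_1] \star \dots \star \Asso[m_t]$ by shuffles of trees completely explicit, cone by cone.
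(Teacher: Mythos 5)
Your proposal is correct in outline, and it takes a genuinely different route to the conclusion than the paper. The paper's proof is \emph{uniform}: after establishing a key combinatorial lemma (its Lemma~\ref{lem:sylvester2}, stating that~$\c A^{\b\omega} = \c A^{\b\omega'}$ if and only if~$\pi^{\b\omega} \equiv_\mathrm{sylv}^{m_1,\dots,m_t} \pi^{\b\omega'}$, proved by a short case analysis on adjacent permutations) and per-cone injectivity (Lemma~\ref{lem:injective2}), it runs the same topological completeness argument as for the single simplex: since both $\PivotFan$ and $\LodayFan[m_1,\dots,m_t]$ are complete fans of the same dimension, $\ProductSlopeMap$ sends cones of the former injectively into distinct cones of the latter, the images fit together into a complete fan by continuity, and two complete fans with cones-in-cones must coincide. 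Notably, the paper never needs global bijectivity of $\ProductSlopeMap$ nor a structural description of either fan. Your route instead \emph{bootstraps from \cref{thm:main1}}: you characterize $\PivotFan$ as the common refinement of the product fan $\PivotFan[\simplex_1][\b c_1]\times\dots\times\PivotFan[\simplex_t][\b c_t]$ with the cross-block slope-equality arrangement, push forward through the bijection $\ProductSlopeMap$ (a bijection precisely because each $\SlopeMap_i$ is one by \cref{thm:main1}), and recognize the image as the normal fan of the shuffle via its Minkowski-sum presentation. Your conditions (a)\,\&\,(b) and the paper's Lemma~\ref{lem:sylvester2} encode the same combinatorial content, and your ``freeze all other blocks at their $\b c$-maximal vertex'' witnesses are exactly the device of Lemmas~\ref{lem:formulaACompleteGraph2}--\ref{lem:projection}. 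The two pieces you flag as ``the real work'' are indeed the points that need care beyond the paper's version: you must justify that the piecewise-linear slope-equality loci, intersected with the product fan, yield a genuine polyhedral refinement (they do, because the loci are linear inside each product-fan cone, but this should be said), and you must match the common-refinement description to the $(m_1,\dots,m_t)$-sylvester fan, which the paper handles in one stroke by citing the Minkowski-sum formula for~$\Asso[m_1]\star\dots\star\Asso[m_t]$ and~\cite[Def.~75 \& Prop.~86]{ChapotonPilaud-shuffle}. Overall, the paper's argument is more economical and avoids the common-refinement bookkeeping; yours makes the product structure and its interaction with the shuffle more explicit, at the price of more moving parts to pin down.
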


We insist that both \cref{thm:main1,thm:main2} deal with piecewise linear maps on fans.
It implies that the polytopes are combinatorially equivalent, but not necessarily normally equivalent.
In fact they are not, and as observed in~\cite{BlackLutjeharmsSanyal}, the pivot polytopes of simplices seem to be new geometric realizations of the associahedron.

The paper is organized as follows.
In \cref{sec:pivotFan}, we recall from~\cite{BlackDeLoeraLutjeharmsSanyal} the constructions of the pivot fan and pivot polytope.
In \cref{sec:simplex}, we define the slope map and prove \cref{thm:main1} for the simplex as a warm up for the general case.
Finally, we prove \cref{thm:main2} in \cref{sec:productSimplices}.

\section{Pivot fan and pivot polytope}
\label{sec:pivotFan}

A \defn{linear program} is a pair $(\polytope{P}, \b c)$ where $\polytope{P}\subset\R^d$ is a $d$-dimensional polytope and $\b c\in \R^d$ is the direction to be optimized.
We denote by~$V(\polytope{P})$ and~$E(\polytope{P})$ the vertex and edge sets of~$\polytope{P}$, and let~$n \eqdef |V(\polytope{P})|$ and~$m \eqdef n-1$.
We assume that $(\polytope{P}, \b c)$ is \defn{generic} in the sense that~${\dotprod{\b c}{\b u} \ne \dotprod{\b c}{\b v}}$ for any~$\b u \b v \in E(\polytope{P})$, where~$\dotprod{\cdot}{\cdot}$ denotes the standard scalar product of~$\R^d$.
An \defn{improving neighbor} of~$\b u \in V(\polytope{P})$ is any~$\b v \in V(\polytope{P})$ such that~$\b{uv} \in E(\polytope{P})$ and~$\dotprod{\b c}{\b u} < \dotprod{\b c}{\b v}$.
By genericity, there is a unique~$\b v_{\max} \in V(\polytope{P})$ maximizing~$\dotprod{\b c}{\b v}$ for~$\b v \in V(P)$ (and it has no improving neighbor).
\mbox{For any~$\b u \ne \b v \in V(\polytope{P})$ and~$\b \omega\in \R^d$, we define:}
\[
\rho^{\b \omega}(\b u, \b v) \eqdef \frac{\dotprod{\b \omega}{\b v - \b u}}{\dotprod{\b c}{\b v - \b u}}.
\]

\begin{definition}[{\cite{BlackDeLoeraLutjeharmsSanyal}}]
\label{def:arborescence}
For a secondary direction $\b\omega\in\R^d$ linearly independent of $\b c$, we define
\[
\tau^{\b\omega} (\b u) \eqdef \max\set{\rho^{\b\omega}(\b u, \b v)}{\b v \text{ improving neighbor of } \b u}.
\]
We say that the direction~$\b\omega \in \R^d$ is \defn{generic} when there is a unique improving neighbor~$\b v$ of~$\b u$ with~${\tau^{\b\omega} (\b u) = \rho^{\b\omega}(\b u, \b v)}$, and we then define~$\c A^{\b\omega}(\b u) \eqdef \b v$. 
By convention we set~$\tau^{\b\omega}(\b v_{\max}) \eqdef -\infty$ and~$\c A^{\b\omega}(\b v_{\max}) \eqdef \b v_{\max}$.
The map~$\c A^{\b\omega} : V(\polytope{P}) \to V(\polytope{P})$ is called the \defn{arborescence} of~$\b\omega$.
\end{definition}

\begin{theorem}[{\cite{BlackDeLoeraLutjeharmsSanyal}}]
The closures of the fibers of the map $\b\omega\mapsto\c A^{\b\omega}$ are the maximal cones of a polyhedral fan~$\PivotFan$, called the \defn{pivot fan} of~$(\polytope{P}, \b c)$.
In other words, $\b\omega$ and $\b \omega'$ belong to the relative interior of the same maximal cone of $\PivotFan$ if and only if $\c A^{\b\omega} = \c A^{\b\omega'}$.
\end{theorem}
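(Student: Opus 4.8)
\medskip
\noindent\textbf{Proof idea.}

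The plan is to linearise the whole construction. The observation to start from is that, for fixed distinct vertices~$\b u,\b v$ of~$\polytope{P}$, the denominator~$\dotprod{\b c}{\b v-\b u}$ appearing in~$\rho^{\b\omega}(\b u,\b v)$ does not depend on~$\b\omega$, so that~$\b\omega\mapsto\rho^{\b\omega}(\b u,\b v)$ is a \emph{linear form} on~$\R^d$ (its denominator being positive whenever~$\b v$ is an improving neighbor of~$\b u$). Consequently, for every vertex~$\b u\ne\b v_{\max}$ and every pair of improving neighbors~$\b v\ne\b v'$ of~$\b u$, the locus
\[
H_{\b u,\b v,\b v'}\eqdef\set{\b\omega\in\R^d}{\rho^{\b\omega}(\b u,\b v)=\rho^{\b\omega}(\b u,\b v')}
\]
is a \emph{linear hyperplane}: its defining linear form vanishes identically only if~$\b v-\b u$ and~$\b v'-\b u$ are positively proportional, which cannot happen since~$\b u\b v$ and~$\b u\b v'$ are distinct edges of~$\polytope{P}$ through~$\b u$. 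This non-collinearity of edges is essentially the only place where the geometry of~$\polytope{P}$ is used. I would also record that~$\R\b c\subseteq H_{\b u,\b v,\b v'}$, since~$\rho^{\lambda\b c}(\b u,\b w)=\lambda$ for every improving neighbor~$\b w$ of~$\b u$.

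Next I would describe the fibers explicitly. For an arborescence~$\c A$ in the image of~$\b\omega\mapsto\c A^{\b\omega}$, the condition~$\c A^{\b\omega}=\c A$ is equivalent to the finite system of strict linear inequalities
\[
\rho^{\b\omega}(\b u,\c A(\b u))>\rho^{\b\omega}(\b u,\b v')\quad\text{for every }\b u\ne\b v_{\max}\text{ and every improving neighbor }\b v'\ne\c A(\b u)\text{ of }\b u,
\]
because this system forces, at each~$\b u$, the maximum~$\tau^{\b\omega}(\b u)$ to be attained uniquely at~$\c A(\b u)$, and it forces~$\b\omega\notin\R\b c$ (hence~$\b\omega$ admissible) as soon as some vertex has two improving neighbors — the remaining degenerate case gives~$\PivotFan=\{\R^d\}$ and is immediate. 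Thus every fiber is a nonempty intersection of finitely many open linear half-spaces, \ie a full-dimensional open convex polyhedral cone, whose closure~$C_{\c A}$ is cut out by the same inequalities made non-strict. Here convexity of the fibers, which is the heart of the matter, is a one-line consequence of linearity: a convex combination of two solutions of the strict system above is again a solution. Moreover distinct fibers are disjoint, and their union is an open dense subset of~$\R^d$ (its complement being contained in the union of the hyperplanes~$H_{\b u,\b v,\b v'}$); hence the cones~$C_{\c A}$ cover~$\R^d$, and each~$C_{\c A}$ is the closure of its interior, which is precisely the fiber of~$\c A$. This already yields the description of the relative interiors asserted in the statement.

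It remains to check that the~$C_{\c A}$ together with all their faces form a polyhedral fan. Stability under taking faces is built in. For the intersection property on maximal cones, I would fix~$\c A\ne\c A'$ in the image and let~$D\eqdef\set{\b u\ne\b v_{\max}}{\c A(\b u)\ne\c A'(\b u)}\ne\emptyset$. For~$\b u\in D$, the inequality~$\rho^{\b\omega}(\b u,\c A(\b u))\ge\rho^{\b\omega}(\b u,\c A'(\b u))$ is one of the constraints defining~$C_{\c A}$ (as~$\c A'(\b u)$ is an improving neighbor of~$\b u$), so~$H_{\b u,\c A(\b u),\c A'(\b u)}$ supports~$C_{\c A}$; symmetrically it supports~$C_{\c A'}$ from the opposite side. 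Hence every point of~$C_{\c A}\cap C_{\c A'}$ lies on~$\Pi\eqdef\bigcap_{\b u\in D}H_{\b u,\c A(\b u),\c A'(\b u)}$, and a short check gives
\[
C_{\c A}\cap C_{\c A'}=C_{\c A}\cap\Pi=C_{\c A'}\cap\Pi,
\]
the last equality holding because on~$\Pi$ the defining inequalities of~$C_{\c A}$ and of~$C_{\c A'}$ literally coincide — they differ only through the identities~$\rho^{\b\omega}(\b u,\c A(\b u))=\rho^{\b\omega}(\b u,\c A'(\b u))$, $\b u\in D$, which hold on~$\Pi$. Thus~$C_{\c A}\cap C_{\c A'}$ is at once a face of~$C_{\c A}$ and of~$C_{\c A'}$, namely their intersection with the supporting subspace~$\Pi$, and the intersection property for arbitrary (not necessarily maximal) cones of the collection then follows by a routine argument on faces of faces.

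I do not expect a deep obstacle: the genuine content is spotting the linearisation, which makes the fibers convex for free, after which the fan axioms reduce to the bookkeeping above. The two points that need a little care are that the~$H_{\b u,\b v,\b v'}$ are honest hyperplanes (via edge non-collinearity), and the identity~$C_{\c A}\cap C_{\c A'}=C_{\c A}\cap\Pi$, which is what forces two adjacent maximal cones to meet along a genuine common face rather than merely share part of their boundaries.
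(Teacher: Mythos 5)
This theorem is quoted without proof from~\cite{BlackDeLoeraLutjeharmsSanyal}; the present paper supplies no internal argument to compare against, so I can only assess your proof on its own merits and against the original reference. Your argument is correct and is, to my knowledge, essentially the proof that~\cite{BlackDeLoeraLutjeharmsSanyal} gives. The linearization observation — that the denominators $\dotprod{\b c}{\b v - \b u}$ are $\b\omega$-independent, so each $\b\omega \mapsto \rho^{\b\omega}(\b u,\b v)$ is a linear form — is the crux; from it each fiber becomes a nonempty finite intersection of open homogeneous half-spaces, hence a full-dimensional open convex cone. You correctly identify and resolve the only two places that require an honest check: that the walls $H_{\b u,\b v,\b v'}$ are genuine hyperplanes (because two distinct edges at a vertex of a polytope cannot be positively parallel, so the difference of the normalized edge vectors is nonzero), and that $C_{\c A} \cap C_{\c A'} = C_{\c A} \cap \Pi = C_{\c A'} \cap \Pi$ with $\Pi = \bigcap_{\b u \in D} H_{\b u,\c A(\b u),\c A'(\b u)}$, which is exactly what forces two maximal cones to meet along a common face rather than merely overlap on their boundaries. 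The remaining steps — that the interior of $C_{\c A}$ is precisely the fiber (using that each defining form is nonzero), that the finitely many closures cover $\R^d$ by density, and that the intersection property descends from maximal cones to all their faces — are standard polyhedral bookkeeping and you invoke them appropriately. I see no gap.
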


\begin{theorem}[{\cite[Thm.~5.4]{BlackDeLoeraLutjeharmsSanyal}}]
The pivot fan~$\PivotFan$ is the normal fan of a polytope, called the \defn{pivot polytope} of~$(\polytope{P}, \b c)$.
\end{theorem}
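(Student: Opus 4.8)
The plan is to realize the pivot fan~$\PivotFan$ as a common refinement of normal fans — one normal fan for each non-maximal vertex of~$\polytope{P}$ — and then to invoke the classical fact that the common refinement of the normal fans of a finite family of polytopes is the normal fan of their Minkowski sum. The key observation that makes this work is that, once the tail vertex~$\b u$ is fixed, the ratio~$\rho^{\b\omega}(\b u,\b v)$ is an honest linear functional in~$\b\omega$.

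Fix~$\b u \in V(\polytope{P}) \setminus \{\vmax\}$. For every improving neighbor~$\b v$ of~$\b u$ we have~$\dotprod{\b c}{\b v - \b u} > 0$ by definition of an improving neighbor, so we may set
\[
\b n_{\b u, \b v} \eqdef \frac{\b v - \b u}{\dotprod{\b c}{\b v - \b u}} \in \R^d, \qquad \text{so that}\qquad \rho^{\b\omega}(\b u, \b v) = \dotprod{\b\omega}{\b n_{\b u, \b v}} \quad\text{and}\quad \dotprod{\b c}{\b n_{\b u, \b v}} = 1 .
\]
Hence, with~$Q_{\b u} \eqdef \conv\set{\b n_{\b u, \b v}}{\b v \text{ improving neighbor of }\b u} \subseteq \{\b x \in \R^d : \dotprod{\b c}{\b x} = 1\}$, the quantity~$\tau^{\b\omega}(\b u)$ is the maximum of the linear functional~$\dotprod{\b\omega}{\cdot}$ over~$Q_{\b u}$; the direction~$\b\omega$ is generic at~$\b u$ exactly when this maximum is attained at a unique vertex of~$Q_{\b u}$, and then~$\c A^{\b\omega}(\b u)$ is the neighbor~$\b v$ realizing that vertex. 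In other words, on its domain of definition the map~$\b\omega \mapsto \c A^{\b\omega}(\b u)$ is nothing but the map sending~$\b\omega$ to the~$\dotprod{\b\omega}{\cdot}$-maximal vertex of~$Q_{\b u}$, so the closures of its fibers are precisely the maximal cones of the normal fan~$\mathcal{N}(Q_{\b u})$ (the neighbors~$\b v$ for which~$\b n_{\b u,\b v}$ is not a vertex of~$Q_{\b u}$ simply never occur as values, which is harmless). Note that adding a multiple of~$\b c$ to~$\b\omega$ shifts all slopes at~$\b u$ by the same constant since~$\dotprod{\b c}{\b n_{\b u, \b v}} = 1$, consistently with~$\R\b c$ lying in the lineality space of~$\mathcal{N}(Q_{\b u})$.

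Now~$\c A^{\b\omega}$ is completely determined by the tuple~$\bigl(\c A^{\b\omega}(\b u)\bigr)_{\b u \in V(\polytope{P}) \setminus \{\vmax\}}$ (its value at~$\vmax$ being always~$\vmax$). Therefore two generic directions~$\b\omega, \b\omega'$ have~$\c A^{\b\omega} = \c A^{\b\omega'}$ if and only if, for every~$\b u$, they lie in a common maximal cone of~$\mathcal{N}(Q_{\b u})$, i.e.\ if and only if they lie in a common cone of the common refinement~$\bigwedge_{\b u} \mathcal{N}(Q_{\b u})$. This reproves that the closures of the fibers of~$\b\omega \mapsto \c A^{\b\omega}$ form a fan, namely~$\PivotFan = \bigwedge_{\b u} \mathcal{N}(Q_{\b u})$, and since the common refinement of the normal fans of finitely many polytopes is the normal fan of their Minkowski sum (a classical fact), we conclude
\[
\PivotFan \;=\; \bigwedge_{\b u \in V(\polytope{P}) \setminus \{\vmax\}} \mathcal{N}(Q_{\b u}) \;=\; \mathcal{N}\!\Bigl( \sum_{\b u \in V(\polytope{P}) \setminus \{\vmax\}} Q_{\b u} \Bigr),
\]
so the pivot fan is the normal fan of the polytope~$\sum_{\b u} Q_{\b u}$, which is thus a pivot polytope. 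The only point that requires genuine care — rather than being a one-line computation — is the bookkeeping in the second paragraph: one must correctly identify "genericity of~$\b\omega$ at~$\b u$'' on the pivot side with "the linear functional~$\dotprod{\b\omega}{\cdot}$ has a unique maximizer on~$Q_{\b u}$'' on the normal-fan side, so that fibers match maximal cones; but there is no substantial obstacle, and in particular this argument also absorbs the two preceding theorems of~\cite{BlackDeLoeraLutjeharmsSanyal}.
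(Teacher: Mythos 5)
Your proof is correct and recovers precisely the construction used in the cited reference \cite{BlackDeLoeraLutjeharmsSanyal}: the pivot polytope is defined there as the Minkowski sum of the ``local'' polytopes~$Q_{\b u} = \conv\set{(\b v - \b u)/\dotprod{\b c}{\b v - \b u}}{\b v \text{ improving neighbor of } \b u}$, and the pivot fan is identified as the common refinement of their normal fans, exactly as you argue. Note that the present paper does not re-prove this statement; it is cited as is (see the remark immediately afterward, ``We skip the definition of the pivot polytope as we work at the level of the pivot fan,'' and the passing observation that~$\b\omega \mapsto \tau^{\b\omega}(\b u)$ appeared in \cite{BlackDeLoeraLutjeharmsSanyal} as the support function of the pivot polytope --- which is precisely the support function of your~$Q_{\b u}$).

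One small point worth making explicit for full rigor: in passing from ``$\c A^{\b\omega} = \c A^{\b\omega'}$'' to ``$\b\omega, \b\omega'$ lie in a common cone of~$\bigwedge_{\b u}\mathcal N(Q_{\b u})$,'' you are implicitly using that a generic~$\b\omega$ lies in the \emph{interior} of a maximal cone of each~$\mathcal N(Q_{\b u})$ (equivalently, the unique maximizer of~$\dotprod{\b\omega}{\cdot}$ on~$Q_{\b u}$ is a vertex), so that the assignment~$\b\omega \mapsto \bigl(\c A^{\b\omega}(\b u)\bigr)_{\b u}$ selects a unique maximal cone of the common refinement and its closure is full-dimensional. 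You flag this as ``the only point requiring genuine care,'' and indeed it is; spelling it out would close the argument completely, but there is no gap in substance.
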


We skip the definition of the pivot polytope as we work at the level of the pivot fan.
Our aim is to construct a piecewise linear map that embeds the pivot fan into the braid fan, especially for products of simplices.
From \cref{def:arborescence}, it is natural to consider the function~$\b\omega \mapsto \tau^{\b\omega}(\b u)$ (which also appeared in the proof of~\cite[Thm.~1.4]{BlackDeLoeraLutjeharmsSanyal} as the support function of the pivot~polytope).

\begin{lemma}
\label{lem:piecewiseLinear}
For any~$\b u \in V(\polytope{P})$, the map~$\b\omega \mapsto \tau^{\b\omega}(\b u)$ is piecewise linear on the cones of the pivot fan~$\PivotFan$.
\end{lemma}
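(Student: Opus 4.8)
The plan is to analyze the function $\b\omega \mapsto \tau^{\b\omega}(\b u)$ directly from its definition as a maximum of the linear-fractional functions $\rho^{\b\omega}(\b u, \b v)$, and to observe that on each maximal cone of $\PivotFan$ the arborescence $\c A^{\b\omega}$ is constant, so the maximizing neighbor is fixed. First I would fix a vertex $\b u \ne \vmax$ (for $\b u = \vmax$ the function is constantly $-\infty$, so there is nothing to prove, or one restricts to the honest part of the fan). For a fixed improving neighbor $\b v$ of $\b u$, the map
\[
\b\omega \longmapsto \rho^{\b\omega}(\b u, \b v) = \frac{\dotprod{\b\omega}{\b v - \b u}}{\dotprod{\b c}{\b v - \b u}}
\]
is \emph{linear} in $\b\omega$, because the denominator $\dotprod{\b c}{\b v - \b u}$ is a nonzero constant (nonzero by genericity of $(\polytope P, \b c)$, since $\b{uv} \in E(\polytope P)$), and it is positive precisely because $\b v$ is an \emph{improving} neighbor. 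So $\tau^{\b\omega}(\b u)$ is a pointwise maximum of finitely many linear functions of $\b\omega$, hence a convex piecewise linear function on all of $\R^d$ — but with its own regions of linearity, which a priori need not refine nicely over $\PivotFan$.

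The key step is therefore to identify the region where a \emph{given} neighbor $\b v$ achieves the maximum with (a union of) maximal cones of the pivot fan. By \cref{def:arborescence} and the preceding theorem, for $\b\omega$ in the relative interior of a maximal cone $C$ of $\PivotFan$, the arborescence $\c A^{\b\omega}$ is constant on $C$; write $\c A^{\b\omega}(\b u) = \b v_C$. By definition of $\c A^{\b\omega}$, this $\b v_C$ is exactly the improving neighbor achieving the maximum $\tau^{\b\omega}(\b u) = \rho^{\b\omega}(\b u, \b v_C)$ for every $\b\omega$ in the interior of $C$. Hence on $C$ the piecewise linear function $\tau^{\cdot}(\b u)$ agrees with the single linear function $\b\omega \mapsto \rho^{\b\omega}(\b u, \b v_C)$, first on the relative interior of $C$ and then, by continuity of both sides, on all of $C$. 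This is precisely the assertion that $\b\omega \mapsto \tau^{\b\omega}(\b u)$ is linear on each cone of $\PivotFan$, which is what the lemma claims.

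The one point that needs a little care — and the main obstacle — is the passage from the relative interior of $C$ to all of $C$, i.e. the continuity argument at the boundary where $\c A^{\b\omega}$ may jump: there one must check that the two linear functions $\rho^{\cdot}(\b u, \b v_C)$ and $\rho^{\cdot}(\b u, \b v_{C'})$ coming from adjacent maximal cones $C, C'$ actually agree on the common wall. This follows because $\tau^{\b\omega}(\b u)$ is a well-defined single-valued function of $\b\omega$ everywhere (it is the max of finitely many linear functions, so continuous on $\R^d$), and on a point $\b\omega_0$ of the shared wall both $\b v_C$ and $\b v_{C'}$ are improving neighbors of $\b u$ that realize the maximum in the limit; so $\rho^{\b\omega_0}(\b u, \b v_C) = \tau^{\b\omega_0}(\b u) = \rho^{\b\omega_0}(\b u, \b v_{C'})$, and the restriction of $\tau^{\cdot}(\b u)$ to $C$ extends to the unique linear function agreeing with $\rho^{\cdot}(\b u, \b v_C)$ there. (If the cone $C$ is lower-dimensional, i.e.\ contained in a wall, the same argument applies using any maximal cone containing it.) Wrapping this up gives the statement. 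In fact the same computation shows a bit more, which will be useful later: on the cone $C$ the linear functional is $\b\omega \mapsto \dotprod{\b\omega}{(\b v_C - \b u)/\dotprod{\b c}{\b v_C - \b u}}$, exhibiting the gradient explicitly.
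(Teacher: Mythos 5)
Your proposal is correct and follows essentially the same route as the paper's proof: identify $\tau^{\b\omega}(\b u)$ on the interior of each maximal cone with the single linear function $\b\omega \mapsto \rho^{\b\omega}(\b u, \c A^{\b\omega}(\b u))$ (where the arborescence is constant), then extend to the closed cone by continuity. Your added observation that $\tau^{\cdot}(\b u)$ is a maximum of finitely many linear functions — and hence continuous and convex on all of $\R^d$ — is a clean way of justifying the continuity that the paper simply asserts, but it does not change the structure of the argument.
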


\begin{proof}
For any $\b v \in V(\polytope{P})$, the map $\b\omega \mapsto \rho^{\b\omega}(\b u, \b v)$ is linear (from $\R^d$ to~$\R$).
If $\b \omega$ and $\b \omega'$ belong to the interior of the same maximal cone of the pivot fan~$\PivotFan$, then $\tau^{\b\omega}(\b u) = \rho^{\b\omega}(\b u, \b v)$ and $\tau^{\b\omega'}(\b u) = \rho^{\b\omega'}(\b u, \b v)$ for the same $\b v = \c A^{\b\omega}(\b u) = \c A^{\b\omega'}(\b u)$.
Hence, $\b\omega \mapsto \tau^{\b\omega}(\b u)$ is linear on the interior of any maximal cone of the pivot fan~$\PivotFan$.
As $\b\omega \mapsto \tau^{\b\omega}(\b u)$ is continuous, it is piecewise linear on the closed cones of the pivot fan~$\PivotFan$.
\end{proof}

\section{Pivot fan of a simplex}
\label{sec:simplex}

\enlargethispage{.1cm}
In this section, we show that the pivot polytope of a simplex~$\simplex_m$ is combinatorially equivalent to the associahedron~$\Asso[m]$.
More precisely, we provide an explicit piecewise linear map from the pivot fan of~$\simplex_m$ to the normal fan of~$\Asso[m]$.

\subsection{Sylvester fan and associahedron}

First, we just briefly recall that the normal fan of Loday's associahedron~$\Asso[m]$ is obtained by coarsening the braid arrangement according to the sylvester congruence.
We refer to \cite{PilaudSantosZiegler} for a detailed survey.

\begin{definition}
The \defn{braid arrangement} is the arrangement of the hyperplanes~$\set{\b{x} \in \R^m}{x_i = x_j}$ for all~$1 \le i < j \le m$.
It has a region~$C(\pi) \eqdef \set{\b{x} \in \R^m}{x_{\pi_1} < \dots < x_{\pi_m}}$ for each permutation~$\pi$ of~$[m]$.
Two regions~$C(\pi)$ and~$C(\pi')$ are adjacent if~$\pi$ and~$\pi'$ are adjacent permutations, meaning that~$\pi = XijY$ and~$\pi' = XjiY$ for two letters~$i,j \in [m]$ and two words~$X,Y$ on~$[m]$.
\end{definition}

\begin{definition}
\label{def:sylvesterCongruence}
The \defn{sylvester congruence} is the equivalence relation~$\equiv_\mathrm{sylv}$ on permutations of~$[m]$ defined by the transitive closure of the rewriting rule~$UjVikW \equiv_\mathrm{sylv} UjVkiW$ for some letters~${1 \le i < j < k \le m}$ and some words~$U,V,W$ on~$[m]$.
In other words, two permutations~$XikY$ and~$XkiY$ adjacent by a simple transposition are sylvester congruent if and only if there exists a letter~$j \in X$ such that~$i < j < k$.
\end{definition}

\begin{definition}
\label{def:sylvesterFan}
The \defn{sylvester fan}~$\LodayFan[m]$ is the fan of~$\R^m$ whose maximal cones are the unions of the regions~$C(\pi)$ of the braid arrangement corresponding to permutations~$\pi$ of~$[m]$ in the same class of the sylvester congruence.
\end{definition}

\begin{proposition}[\cite{Loday,Postnikov}]
\label{prop:Loday}
The sylvester fan is the normal fan of the \defn{associahedron}
\[
\Asso \eqdef \sum_{1 \le i < k \le m} \conv\set{-\b{e}_j}{i \le j \le k}.
\]
\end{proposition}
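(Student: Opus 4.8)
The plan is to compute the normal fan of $\Asso$ directly. Since $\Asso$ is a Minkowski sum, its normal fan is the common refinement of the normal fans of its summands $\simplex_{(i,k)} \eqdef \conv\set{-\b e_j}{i \le j \le k}$, for $1 \le i < k \le m$. Each $\simplex_{(i,k)}$ is a simplex whose normal fan coarsens the braid fan: as $\dotprod{\b x}{-\b e_j} = -x_j$, the vertex of $\simplex_{(i,k)}$ at which $\dotprod{\b x}{\cdot}$ is maximal is $-\b e_j$ with $x_j = \min\set{x_\ell}{i \le \ell \le k}$, so the braid chamber $C(\sigma)$ lies in the maximal cone of the normal fan of $\simplex_{(i,k)}$ indexed by the letter of $\{i, i+1, \dots, k\}$ occurring first in the word $\sigma$. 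In particular the normal fan of $\Asso$ also coarsens the braid fan.

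Next, both the normal fan of $\Asso$ and the sylvester fan $\LodayFan[m]$ coarsen the braid fan, so each is determined by the induced partition of the braid chambers into maximal cones; and this partition is the transitive closure of the relation ``two \emph{adjacent} braid chambers lie in a common maximal cone'' (any two chambers contained in a common maximal cone, a convex hence connected set, are linked by a chain of adjacent chambers lying in it). Hence it suffices to prove that for any two adjacent permutations $\pi = X i k Y$ and $\pi' = X k i Y$ with $i < k$, the chambers $C(\pi)$ and $C(\pi')$ lie in a common maximal cone of the normal fan of $\Asso$ if and only if $\pi \equiv_{\mathrm{sylv}} \pi'$.

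For this, $C(\pi)$ and $C(\pi')$ lie in a common maximal cone of the normal fan of $\Asso$ exactly when no summand separates them, i.e.\ when for every $1 \le i' < k' \le m$ the letter of $\{i', \dots, k'\}$ occurring first in $X i k Y$ is the same as the one occurring first in $X k i Y$. Transposing the adjacent letters $i$ and $k$ changes this first letter precisely for summands with $\{i, k\} \subseteq \{i', \dots, k'\}$ and $X \cap \{i', \dots, k'\} = \emptyset$, and then it toggles between $i$ and $k$. Since every interval containing $i$ and $k$ contains $\{i, i+1, \dots, k\}$, such a summand exists if and only if $X \cap \{i, \dots, k\} = \emptyset$, i.e.\ no letter $j$ with $i < j < k$ lies in $X$. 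Thus $C(\pi)$ and $C(\pi')$ are merged if and only if there is $j \in X$ with $i < j < k$, which by \cref{def:sylvesterCongruence} is exactly $\pi \equiv_{\mathrm{sylv}} \pi'$. Therefore the maximal cones of the normal fan of $\Asso$ are precisely the sylvester classes (as unions of braid chambers), so this normal fan equals $\LodayFan[m]$.

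The reduction rests on two standard facts -- that the normal fan of a Minkowski sum is the common refinement of the normal fans of the summands, and that a coarsening of the braid fan is reconstructed from its action on adjacent chambers by transitive closure -- plus the harmless observation that every fan in play carries the lineality line $\R \b 1$. The genuine content is then the short computation above, and the one point to handle carefully is the case analysis pinning down exactly when transposing $i$ and $k$ in a word alters the first letter of an interval $\{i', \dots, k'\}$.
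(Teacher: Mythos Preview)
Your argument is correct. The paper does not give its own proof of this proposition; it simply records it as a known fact with citations to Loday and Postnikov, so there is no ``paper's proof'' to compare against. Your route --- computing the normal fan of the Minkowski sum as the common refinement of the normal fans of the interval simplices $\simplex_{(i,k)}$, observing that each of these coarsens the braid fan, and then checking on adjacent chambers exactly when a wall survives --- is the standard direct verification and is cleanly executed. The key case analysis (that swapping $i$ and $k$ in $XikY$ changes the leftmost element of $\{i',\dots,k'\}$ precisely when $\{i,k\}\subseteq\{i',\dots,k'\}$ and $X\cap\{i',\dots,k'\}=\emptyset$, and that such an interval exists iff no $j$ with $i<j<k$ lies in $X$) is correct as stated.
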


\begin{remark}
The attentive reader has noticed our unusual conventions in \cref{def:sylvesterCongruence} and \cref{prop:Loday}.
The sylvester congruence is usually the transitive closure of~${UikVjW \equiv UkiVjW}$, and defines the normal fan of Loday's associahedron~$\sum_{1 \le i < k \le m} \conv\set{\b{e}_j}{i \le j \le k}$.
Our definitions, sometimes called anti-sylvester congruence and anti-associahedron, are equivalent up to central symmetry and fit better the max-slope pivot rule.
\end{remark}

\subsection{Slope map}
\label{subsec:slopeMap}

We now assume that the graph of~$\polytope{P}$ is complete, and we denote by $\b u_1, \dots, \b u_n$ the vertices of~$\polytope{P}$ such that~$\dotprod{\b c}{\b u_i} < \dotprod{\b c}{\b u_j}$ for~$i < j$.
We therefore abuse notation, considering that~$\rho^{\b\omega} : [n] \times [n] \to \R$, that~$\tau^{\b\omega} : [n] \to \R\cup\{-\infty\}$, and that~$\c A^{\b\omega} : [n] \to [n]$ for any generic~$\b\omega$.
The following statement builds on~\cite[Lem.~3.6]{BlackLutjeharmsSanyal}.

\begin{lemma}
\label{lem:formulaACompleteGraph1}
If the graph of $\polytope{P}$ is complete, then for any generic~$\b\omega \in \R^d$ and~$i \in [m]$, we have
\[
\c A^{\b\omega}(i) = \min \set{j \in [n]}{i < j \text{ and } \tau^{\b\omega}(i) > \tau^{\b\omega}(j)}.
\]
\end{lemma}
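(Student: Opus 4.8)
The plan is to unfold the definition of $\c A^{\b\omega}(i)$ and translate the max-slope condition into a statement about how $\tau^{\b\omega}$ compares at $i$ and at the vertices reachable from $i$. Since the graph of $\polytope{P}$ is complete, the improving neighbors of $i$ are exactly the vertices $j$ with $j > i$ (those with strictly larger $\b c$-value), so $\tau^{\b\omega}(i) = \max_{j > i} \rho^{\b\omega}(i,j)$ and $\c A^{\b\omega}(i)$ is the unique $j > i$ attaining this maximum. I would first record the elementary but crucial sign observation: for $j > i$ we have $\dotprod{\b c}{\b u_j - \b u_i} > 0$, so multiplying through by this positive quantity, the inequality $\rho^{\b\omega}(i,j) > \rho^{\b\omega}(i,k)$ is equivalent to a linear inequality that behaves well under the "cocycle" identity $\dotprod{\b\omega}{\b u_k - \b u_i} = \dotprod{\b\omega}{\b u_k - \b u_j} + \dotprod{\b\omega}{\b u_j - \b u_i}$ (and likewise for $\b c$). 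Concretely, I would prove the key algebraic lemma: for $i < j < k$, one has $\rho^{\b\omega}(i,j) > \rho^{\b\omega}(i,k)$ if and only if $\rho^{\b\omega}(i,j) > \rho^{\b\omega}(j,k)$, because $\rho^{\b\omega}(i,k)$ is a weighted mediant of $\rho^{\b\omega}(i,j)$ and $\rho^{\b\omega}(j,k)$ with positive weights $\dotprod{\b c}{\b u_j - \b u_i}$ and $\dotprod{\b c}{\b u_k - \b u_j}$; hence $\rho^{\b\omega}(i,k)$ lies strictly between them (when they differ), and in particular $\rho^{\b\omega}(i,j)$ exceeds $\rho^{\b\omega}(i,k)$ exactly when it exceeds $\rho^{\b\omega}(j,k)$.

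With this mediant lemma in hand, the proof is a short induction. Write $j_0 \eqdef \c A^{\b\omega}(i)$ and $j_1 \eqdef \min\set{j \in [n]}{i < j \text{ and } \tau^{\b\omega}(i) > \tau^{\b\omega}(j)}$; the goal is $j_0 = j_1$. For the inequality $j_1 \le j_0$: since $j_0$ is the maximizer of $\rho^{\b\omega}(i,\cdot)$ among improving neighbors, for every $k$ with $i < k$, $k \ne j_0$ we have $\rho^{\b\omega}(i,j_0) > \rho^{\b\omega}(i,k)$, and taking $k = j_0$ into the definition of $\tau^{\b\omega}(j_0) = \max_{k > j_0}\rho^{\b\omega}(j_0,k)$, the mediant lemma (applied with the triple $i < j_0 < k$) gives $\rho^{\b\omega}(i,j_0) > \rho^{\b\omega}(j_0,k)$ for all $k > j_0$, hence $\tau^{\b\omega}(i) = \rho^{\b\omega}(i,j_0) > \tau^{\b\omega}(j_0)$ (this also covers the boundary case $j_0 = v_{\max}$, where $\tau^{\b\omega}(j_0) = -\infty$). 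So $j_0$ belongs to the set defining $j_1$, whence $j_1 \le j_0$. For the reverse inequality $j_0 \le j_1$: I claim that for every $j$ with $i < j < j_1$ we have $\rho^{\b\omega}(i,j_0) > \rho^{\b\omega}(i,j)$ automatically (by maximality of $j_0$, unless $j = j_0$), so it remains to rule out $j_0 < j_1$ strictly. By minimality of $j_1$, every $j$ with $i < j < j_1$ satisfies $\tau^{\b\omega}(i) \le \tau^{\b\omega}(j)$, i.e. $\tau^{\b\omega}(i) < \tau^{\b\omega}(j)$ by genericity; if $j_0$ were such a $j$, then $\tau^{\b\omega}(i) < \tau^{\b\omega}(j_0) = \max_{k > j_0}\rho^{\b\omega}(j_0,k)$, so $\rho^{\b\omega}(i,j_0) < \rho^{\b\omega}(j_0,k)$ for some $k > j_0$, and the mediant lemma then forces $\rho^{\b\omega}(i,k) > \rho^{\b\omega}(i,j_0)$, contradicting maximality of $\rho^{\b\omega}(i,j_0)$. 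Hence $j_0 \ge j_1$, and combined with the above $j_0 = j_1$.

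I expect the main obstacle to be the careful bookkeeping around the boundary vertex $v_{\max}$ (where $\tau^{\b\omega} = -\infty$ and the set of improving neighbors is empty) and the scrupulous use of genericity to turn the non-strict inequalities coming from "minimality of $j_1$" into strict ones, so that the mediant lemma — which is an \emph{open} condition — applies cleanly. The mediant lemma itself is a one-line computation once the positivity of $\dotprod{\b c}{\b u_j - \b u_i}$ for $i<j$ is noted, but it is the conceptual heart of the argument: it says that the max-slope data $\tau^{\b\omega}$ is "consistent along chains," which is precisely what lets the recursive formula hold. An alternative, slightly slicker packaging would be to observe that $\rho^{\b\omega}(i,\cdot)$ restricted to $\{i+1,\dots,n\}$ and the comparison with $\tau^{\b\omega}(j)$ encode a "first descent" of the sequence $\big(\tau^{\b\omega}(i+1), \tau^{\b\omega}(i+2), \dots\big)$ below the level $\tau^{\b\omega}(i)$; but I would keep the explicit two-inequalities argument above since it is the most transparent and matches the style of the surrounding text.
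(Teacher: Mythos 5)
Your mediant lemma is exactly the paper's separation argument, and your first half, establishing $j_1 \le j_0$ (that $j_0 \eqdef \c A^{\b\omega}(i)$ belongs to the defining set, because $\tau^{\b\omega}(i) > \tau^{\b\omega}(j_0)$), is correct and matches the paper's second step. But the second half has a logical circularity that leaves the opposite inequality unproved. You assume $j_0 < j_1$ and derive a contradiction, concluding $j_0 \ge j_1$, and then assert that this ``combined with the above'' gives $j_0 = j_1$. It does not: $j_0 \ge j_1$ is literally the same inequality as the $j_1 \le j_0$ you already proved, so combining them is vacuous. (In fact the hypothesis $j_0 < j_1$ of your second argument already contradicts Part 1, so the entire contradiction is idle.) The inequality you labeled ``reverse'' --- namely $j_0 \le j_1$ --- is never established.

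What is missing is precisely the paper's first observation: for every $j$ with $i < j < j_0$, one must show $\tau^{\b\omega}(i) < \tau^{\b\omega}(j)$, so that no such $j$ lies in the set and hence $j_1 \ge j_0$. This requires a second, different application of the mediant lemma, to the triple $i < j < j_0$ with $j_0$ in the \emph{rightmost} position; your argument only ever invoked triples of the form $i < j_0 < k$ with $j_0$ in the \emph{middle}. Concretely: since the graph is complete, such a $j$ is an improving neighbor of $i$ distinct from $j_0$, so $\rho^{\b\omega}(i,j) < \rho^{\b\omega}(i,j_0)$ by maximality; the mediant lemma on $i < j < j_0$ then forces $\rho^{\b\omega}(i,j_0) < \rho^{\b\omega}(j,j_0)$, whence $\tau^{\b\omega}(i) = \rho^{\b\omega}(i,j_0) < \rho^{\b\omega}(j,j_0) \le \tau^{\b\omega}(j)$. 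Adding this step closes the gap and recovers the paper's proof.
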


\begin{proof}
For $i < j < k$, we have
\[
\rho^{\b\omega}(i,k) = \frac{\dotprod{\b c}{\b u_j - \b u_i}}{\dotprod{\b c}{\b u_k - \b u_i}}\rho^{\b\omega}(i,j) + \frac{\dotprod{\b c}{\b u_k - \b u_j}}{\dotprod{\b c}{\b u_k - \b u_i}}\rho^{\b\omega}(j,k)
\]
which is a strict convex combination, so that~$\rho^{\b\omega}(i,k)$ separates $\rho^{\b\omega}(i,j)$ from $\rho^{\b\omega}(j,k)$.

Fix a generic $\b\omega \in \R^d$.
For $i \in [m]$ we have~$\c A^{\b\omega}(i) > i$ by definition.
For any $j$ with~${i < j < \c A^{\b\omega}(i)}$, we know that~$j$ is an improving neighbor of~$i$ (because $i < j$ and the graph of $\polytope{P}$ is complete), hence $\rho^{\b\omega}(i, j) < \rho^{\b\omega}(i, \c A^{\b\omega}(i))$.
Applying the separation argument to~$i < j < \c A^{\b\omega}(i)$, we obtain  ${\rho^{\b\omega}(i, j) < \rho^{\b\omega}(i, \c A^{\b\omega}(i)) < \rho^{\b\omega}(j, \c A^{\b\omega}(i))}$.
Hence,~${\tau^{\b\omega}(i) = \rho^{\b\omega}(i, \c A^{\b\omega}(i)) < \rho^{\b\omega}(j, \c A^{\b\omega}(i)) \leq \tau^{\b\omega}(j)}$. \linebreak
Finally, observe that~$\rho^{\b\omega}\bigl(i, \c A^{\b\omega}(\c A^{\b\omega}(i))\bigr) < \rho^{\b\omega}(i, \c A^{\b\omega}(i))$.
Applying again the separation argument to~${i < \c A^{\b\omega}(i) < \c A^{\b\omega}(\c A^{\b\omega}(i))}$, we obtain ${\rho^{\b\omega}(\c A^{\b\omega}(i), \c A^{\b\omega}(\c A^{\b\omega}(i))) < \rho^{\b\omega}(i, \c A^{\b\omega}(\c A^{\b\omega}(i))) < \rho^{\b\omega}(i, \c A^{\b\omega}(i))}$.
Hence~$\tau^{\b\omega}(\c A^{\b\omega}(i)) = \rho^{\b\omega}(\c A^{\b\omega}(i), \c A^{\b\omega}(\c A^{\b\omega}(i))) < \rho^{\b\omega}(i, \c A^{\b\omega}(i)) = \tau^{\b\omega}(i)$.
This proves the lemma.
\end{proof}

For a generic~$\b\omega \in \R^d$, we define~\defn{$\pi^{\b\omega}$} as the permutation of~$[m]$ such that $\tau^{\b\omega}(\pi^{\b\omega}_1) < \dots < \tau^{\b\omega}(\pi^{\b\omega}_m)$.

\begin{lemma}
\label{lem:sylvester1}
If the graph of $\polytope{P}$ is complete, then for any two generic~$\b\omega, \b\omega' \in \R^d$, we have~$\c A^{\b\omega} = \c A^{\b\omega'}$ if and only if $\pi^{\b\omega} \equiv_\mathrm{sylv} \pi^{\b\omega'}$.
\end{lemma}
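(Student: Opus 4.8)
The plan is to show that the arborescence $\c A^{\b\omega}$ is determined by, and determines, the sylvester class of $\pi^{\b\omega}$. The key point is \cref{lem:formulaACompleteGraph1}: since the graph is complete, $\c A^{\b\omega}(i) = \min\set{j}{i < j \text{ and } \tau^{\b\omega}(i) > \tau^{\b\omega}(j)}$, so $\c A^{\b\omega}$ depends only on the relative order of the values $\tau^{\b\omega}(1), \dots, \tau^{\b\omega}(n)$ restricted to $[m]$ — that is, only on the permutation $\pi^{\b\omega}$ of $[m]$. (Recall $\tau^{\b\omega}(n) = -\infty$ is smallest, and $\pi^{\b\omega}$ records the order of the finite values $\tau^{\b\omega}(1), \dots, \tau^{\b\omega}(m)$.) So it suffices to prove: for permutations $\pi, \pi'$ of $[m]$ that each arise as $\pi^{\b\omega}$ for some generic $\b\omega$, the associated arborescences (obtained by the formula above, reading the positions $1,\dots,m$ in the order dictated by $\pi$ versus the fixed position $n$) coincide if and only if $\pi \equiv_\mathrm{sylv} \pi'$.

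\textbf{First}, I would establish the ``if'' direction by analyzing a single sylvester rewriting step. Suppose $\pi = U j V i k W$ and $\pi' = U j V k i W$ with $i < j < k$; equivalently (using the adjacent-transposition form of \cref{def:sylvesterCongruence}), $\pi$ and $\pi'$ differ by swapping two adjacent values $i < k$ in the $\tau$-order, with some value $j$ strictly between them in size already placed earlier (i.e.\ with smaller $\tau$-value). Here ``earlier in $\pi$'' means smaller $\tau$; so the swap exchanges whether $\tau^{\b\omega}(i) < \tau^{\b\omega}(k)$ or $\tau^{\b\omega}(k) < \tau^{\b\omega}(i)$, with $\tau^{\b\omega}(j)$ below both. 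I then check, using the formula of \cref{lem:formulaACompleteGraph1}, that $\c A$ is unchanged at every index. The crucial observation is: for any index $\ell$ with $\ell < i$ (resp.\ $\ell < k$), the value $\c A^{\b\omega}(\ell)$ scans for the first $j' > \ell$ with $\tau^{\b\omega}(\ell) > \tau^{\b\omega}(j')$. Since $j$ is a candidate with $j > i$ and $\tau^{\b\omega}(j)$ smaller than both $\tau^{\b\omega}(i)$ and $\tau^{\b\omega}(k)$, and $i < j < k$, the scan for any $\ell$ will encounter $j$ (or an even earlier qualifying index) before it could be affected by the relative order of $\tau^{\b\omega}(i)$ and $\tau^{\b\omega}(k)$. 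This is the heart of the matter and requires a careful case split on whether $\ell < \min(i,k)$, $\ell = i$, $\ell = k$, $\ell$ between them, etc.; one shows in each case that swapping $\tau^{\b\omega}(i) \leftrightarrow \tau^{\b\omega}(k)$ does not change which index is selected, precisely because the ``witness'' $j$ sits between them both in position and in being $\tau$-smaller. Iterating over all rewriting steps in a sylvester equivalence gives $\c A^{\b\omega} = \c A^{\b\omega'}$.

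\textbf{Conversely}, for the ``only if'' direction, I would argue the contrapositive: if $\pi \not\equiv_\mathrm{sylv} \pi'$, I exhibit an index where the arborescences differ. By standard properties of the sylvester congruence (the class of $\pi$ is recovered from its \emph{binary search tree}, equivalently from the data, for each $i$, of the nearest value to the right of $i$ in the word that is smaller than $i$ — which is exactly what the formula $\c A^{\b\omega}(i) = \min\set{j>i}{\tau^{\b\omega}(i) > \tau^{\b\omega}(j)}$ computes when $\tau$-order is read as word-order), two permutations are sylvester equivalent if and only if they induce the same such ``nearest smaller to the right'' data. Since that data is precisely $\c A$, distinct classes force distinct $\c A$. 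I would phrase this cleanly by recalling that the map $\pi \mapsto (\text{BST of }\pi)$ has fibers exactly the sylvester classes, and that the BST is reconstructible from $\c A$. A small technical point to dispatch: one must confirm that every permutation $\pi$ of $[m]$ is realized as $\pi^{\b\omega}$ for some generic $\b\omega$ (so that both directions range over all of $\mathfrak{S}_m$), or alternatively argue entirely at the level of abstract permutations and only afterwards intersect with the set of realizable ones; the cleanest route is to prove the equivalence ``$\c A_\pi = \c A_{\pi'} \iff \pi \equiv_\mathrm{sylv} \pi'$'' for all $\pi,\pi' \in \mathfrak{S}_m$ purely combinatorially, and then specialize.

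\textbf{The main obstacle} I anticipate is the bookkeeping in the ``if'' direction: verifying that a single sylvester move leaves $\c A^{\b\omega}$ invariant at \emph{every} index $\ell$ requires carefully locating, relative to $\ell$, the swapped indices $i$ and $k$ and the witness $j$, and checking that the $\min$ in the formula is unaffected. The conceptual content is small — it is exactly the classical fact that BSTs encode sylvester classes — but translating between ``nearest smaller element to the right'' and the $\tau$-value formalism, while tracking which of $i,k$ plays which role after the swap, is where errors can creep in. Once this lemma is in place, combined with \cref{lem:formulaACompleteGraph1} it immediately yields the statement.
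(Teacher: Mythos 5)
Your proposal is essentially correct and follows the same route as the paper: reduce to a single adjacent transposition (iterating over a chain of rewrites for the general case), then analyze the formula of \cref{lem:formulaACompleteGraph1}. A few points of comparison are worth noting. The paper's treatment of the single transposition $\pi^{\b\omega} = XikY \mapsto \pi^{\b\omega'} = XkiY$ is leaner than the case split you anticipate: it rewrites $\set{j \in [n]}{p < j,\ \tau^{\b\omega}(p) > \tau^{\b\omega}(j)}$ as $\{\pi^{\b\omega}_1,\dots,\pi^{\b\omega}_q\}\ssm[p]$ (with $p=\pi^{\b\omega}_q$) and observes that this set agrees between the two permutations for \emph{every} $p$ except $p=i$ — for $p\ne i$ the prefix as a set is unchanged after removing $[p]$, including at $p=k$ where the extra element $i$ is discarded because $i<k$. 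So the whole argument collapses to inspecting $\c A(i)$, with no case analysis over $\ell<i$, $\ell=k$, $\ell$ between, etc. Moreover, once reduced to $p=i$, the paper handles both directions symmetrically in one stroke: a witness $j\in X$ with $i<j<k$ exists if and only if adding $k$ to the candidate set does not change the minimum, which is simultaneously the sylvester-rewrite criterion and the arborescence-invariance criterion. You instead split off the ``only if'' direction and appeal to the external fact that the sylvester class is encoded by the binary search tree (nearest-smaller-to-the-right data); that is a valid and classical alternative, at the cost of importing a result the paper proves in-line. Finally, your concern about realizability of intermediate permutations is the right reflex, and your proposed resolution — observe that $\c A^{\b\omega}$ is a function of $\pi^{\b\omega}$ alone, prove the equivalence combinatorially for all $\pi,\pi'\in\mathfrak{S}_m$, then specialize — is exactly the way to make the ``induction on distance'' reduction airtight; the paper leaves this implicit.
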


\begin{proof}
We assume that~$\pi^{\b\omega}$ and~$\pi^{\b\omega'}$ are adjacent permutations; the general case follows by induction on the distance from~$\pi^{\b\omega}$ to~$\pi^{\b\omega'}$ (\ie the minimum number of simple transpositions necessary to transform~$\pi^{\b\omega}$ to~$\pi^{\b\omega'}$).
Hence~$\pi^{\b\omega} = XikY$ while~${\pi^{\b\omega'} = XkiY}$ for some letters~${1 \le i < k \le m}$ and some words~$X,Y$ on~$[m]$.
For any~$p \in [n]$, we have
\(
\set{j \in [n]}{p < j \text{ and } \tau^{\b\omega}(p) > \tau^{\b\omega}(j)} = \{\pi^{\b\omega}_1, \pi^{\b\omega}_2, \dots, \pi^{\b\omega}_q\} \ssm [p]
\)
where~$p = \pi^{\b\omega}_q$.
Note that~$\{\pi^{\b\omega}_1, \pi^{\b\omega}_2, \dots, \pi^{\b\omega}_q\} \ssm [p] = \{\pi^{\b\omega'}_1, \pi^{\b\omega}_2, \dots, \pi^{\b\omega'}_q\} \ssm [p]$ except if~$p = i$.
Consequently, \cref{lem:formulaACompleteGraph1} ensures that~${\c A^{\b\omega}(p) \!=\! \c A^{\b\omega'}(p)}$ except maybe when~$p = i$.
We distinguish two cases:
\begin{itemize}
    \item if there is~$j \in X$ with~$i < j < k$, then~$\c A^{\b\omega}(i) = \c A^{\b\omega'}(i) \le j$,
    \item otherwise, $\c A^{\b\omega}(i) \ne k = \c A^{\b\omega'}(i)$.
    \qedhere
\end{itemize}
\end{proof}

\cref{lem:sylvester1} motivates the following definition.

\begin{definition}
We define the \defn{slope map} $\SlopeMap: \R^d \to \R^m$ by~$\SlopeMap(\b \omega) = \bigl(\tau^{\b\omega}(i) \bigr)_{i \in [m]}$.
\end{definition}

\begin{lemma}
\label{lem:injective1}
On each maximal cone of the pivot fan~$\PivotFan$, the slope map~$\SlopeMap$ is linear and injective.
\end{lemma}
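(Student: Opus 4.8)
The statement claims that $\SlopeMap$ is linear and injective on each maximal cone $\sigma$ of the pivot fan. Linearity is essentially already in hand from \cref{lem:piecewiseLinear}: on the interior of a maximal cone $\sigma$ of $\PivotFan$, the arborescence $\c A^{\b\omega}$ is constant, so for each $i \in [m]$ the value $\tau^{\b\omega}(i) = \rho^{\b\omega}(i, \c A^{\b\omega}(i))$ agrees with a single linear functional $\b\omega \mapsto \rho^{\b\omega}(i, \c A(i))$ where $\c A$ is the common arborescence; by continuity this linear formula extends to all of the closed cone $\sigma$, so $\SlopeMap$ restricted to $\sigma$ is the restriction of a genuine linear map $\R^d \to \R^m$. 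So the real content is injectivity, and since $\SlopeMap|_\sigma$ is linear, it suffices to show that the linear map has trivial kernel on $\sigma$, or equivalently that $\SlopeMap|_\sigma$ is injective on the interior of $\sigma$ (a linear map injective on an open subset of its domain's span is injective there, and one checks $\sigma$ spans $\R^d$ — or more carefully, we argue injectivity directly on all of $\sigma$).

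\textbf{The key step for injectivity.} Fix the maximal cone $\sigma$ with common arborescence $\c A$. I would reconstruct $\b\omega$ (up to the expected ambiguity) from the vector $(\tau^{\b\omega}(i))_{i \in [m]}$. The point is that $\tau^{\b\omega}(i) = \rho^{\b\omega}(i, \c A(i)) = \dfrac{\dotprod{\b\omega}{\b u_{\c A(i)} - \b u_i}}{\dotprod{\b c}{\b u_{\c A(i)} - \b u_i}}$, so knowing $\tau^{\b\omega}(i)$ is equivalent to knowing $\dotprod{\b\omega}{\b u_{\c A(i)} - \b u_i}$ (the denominator is a nonzero constant on $\sigma$, independent of $\b\omega$). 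Thus $\SlopeMap|_\sigma$ determines, and is determined by, the collection of scalars $\bigl(\dotprod{\b\omega}{\b u_{\c A(i)} - \b u_i}\bigr)_{i \in [m]}$, i.e.\ the image of $\b\omega$ under the linear map $\R^d \to \R^m$ whose rows are the vectors $\b u_{\c A(i)} - \b u_i$. So injectivity of $\SlopeMap|_\sigma$ is equivalent to the statement that $\{\,\b u_{\c A(i)} - \b u_i : i \in [m]\,\}$ spans $\R^d$ (recall $d = m$ here, since $\polytope{P}$ has a complete graph it is a simplex, so $n = m+1$ and $d = m$). Equivalently: the edges $\b u_i \b u_{\c A(i)}$, one per non-maximal vertex, together with any fixed vertex, affinely span the simplex.

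\textbf{Why this spanning holds.} This is a combinatorial fact about $\c A$: for a simplex with vertices $\b u_1, \dots, \b u_{m+1}$ ordered by $\b c$-value, $\c A$ assigns to each $i \le m$ some $\c A(i) > i$, so $\c A$ is a function $[m] \to \{2, \dots, m+1\}$ with $\c A(i) > i$. I claim the $m$ vectors $\b u_{\c A(i)} - \b u_i$ are linearly independent. Suppose $\sum_{i=1}^m \lambda_i (\b u_{\c A(i)} - \b u_i) = 0$. Expand in the basis-like relations among the $\b u$'s: since the $\b u_j$ are affinely independent, any linear dependence $\sum_j \mu_j \b u_j = 0$ forces $\sum_j \mu_j = 0$ and then all $\mu_j = 0$. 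The coefficient of $\b u_1$ in $\sum_i \lambda_i(\b u_{\c A(i)} - \b u_i)$ is $-\lambda_1$ (as $\c A(i) > i \ge 1$ never hits $1$, and $i = 1$ contributes $-\lambda_1$ to $\b u_1$), so looking at $\b u_1$ we must cancel $-\lambda_1$; but $\b u_1$ appears nowhere else with the needed sign-structure unless we use the affine relation — this is exactly where care is needed. The clean way: consider the directed graph on $\{1,\dots,m+1\}$ with an arc $i \to \c A(i)$ for each $i \in [m]$; it has $m$ arcs on $m+1$ vertices, each non-maximal vertex has out-degree $1$, and following arcs from any $i$ strictly increases the index so we reach $m+1$ — hence the graph is a tree (a spanning arborescence rooted at $m+1$). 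For a spanning tree of a simplex, the edge vectors are linearly independent (standard: the incidence structure of a tree on $m+1$ affinely independent points yields $m$ linearly independent difference vectors). This gives the required spanning, hence injectivity.

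\textbf{Main obstacle.} The only delicate point is making the tree/linear-independence argument airtight — i.e.\ justifying cleanly that the edge-difference vectors of a spanning tree on affinely independent points are linearly independent, and confirming the dimension bookkeeping $d = m$ (which uses that a complete-graph polytope is a simplex, so this lemma as stated is about the simplex case — consistent with its placement in \cref{sec:simplex}). Everything else (linearity, the reduction of injectivity to spanning via the constancy of denominators on $\sigma$) is routine given \cref{lem:piecewiseLinear} and \cref{lem:formulaACompleteGraph1}. I would structure the write-up as: (1) linearity on $\sigma$ from constancy of $\c A$; (2) the denominators $\dotprod{\b c}{\b u_{\c A(i)} - \b u_i}$ are nonzero constants on $\sigma$, so injectivity $\iff$ the map $\b\omega \mapsto (\dotprod{\b\omega}{\b u_{\c A(i)} - \b u_i})_i$ is injective $\iff$ $\{\b u_{\c A(i)} - \b u_i\}_i$ spans; (3) the arcs $i \to \c A(i)$ form a spanning tree, whence the spanning/independence conclusion.
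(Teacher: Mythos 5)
Your proposal is correct and follows essentially the same route as the paper: linearity from \cref{lem:piecewiseLinear}, then reduction of injectivity to the fact that the edge vectors $\b u_{\c A(i)} - \b u_i$ of the arborescence span $\R^d$, which holds because $\c A$ is a spanning tree and $\polytope{P}$ is full dimensional. The paper phrases the injectivity step dually (if $\SlopeMap(\b\omega) = \SlopeMap(\b\omega')$ then $\b\omega - \b\omega'$ is orthogonal to every arborescence edge, so it vanishes) and justifies the spanning-tree property by genericity of $(\polytope{P},\b c)$ alone rather than your index-increasing argument, but these are cosmetic differences.

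One small inaccuracy in a parenthetical remark: a polytope with a complete graph need not be a simplex (cyclic polytopes $C_d(n)$ for $d \geq 4$ are $2$-neighborly, hence have complete graphs), so "$d = m$" is not automatic from the complete-graph hypothesis. Fortunately your argument never actually uses $d = m$: the spanning of the $m$ edge vectors in $\R^d$ follows from full-dimensionality of $\polytope{P}$ regardless, and the equivalence "injective $\iff$ the vectors span $\R^d$" is valid for any $m \geq d$. You should simply drop the claim that a complete-graph polytope is a simplex.
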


\begin{proof}
The map~$\SlopeMap$ is linear on each maximal cone of the pivot fan~$\PivotFan$ by~\cref{lem:piecewiseLinear}.
If~$\b\omega, \b\omega' \in \R^d$ are such that~$\c A^{\b\omega} = \c A^{\b\omega'}$ and~$\SlopeMap(\b\omega) = \SlopeMap(\b\omega')$, then all edges~$\b u_i \b u_{\c A^{\b\omega}(i)}$ are orthogonal to~$\b\omega-\b\omega'$.
As~$(\polytope{P}, \b c)$ is generic, the arborescence~$\c A^{\b\omega}$ is a spanning tree of the graph of~$\polytope{P}$.
Hence, as~$\polytope{P}$ is full dimensional, so is the span of~$\set{\b u_i \b u_{\c A^{\b\omega}(i)}}{i \in [m]}$, which implies that $\b\omega = \b\omega'$.
\end{proof}

\subsection{Pivot fan of a simplex}

We are now ready to show that the slope map sends the pivot fan of the simplex~$\simplex_m$ to the sylvester fan~$\LodayFan$, which refines~\cite[Thm.~4.3]{BlackLutjeharmsSanyal}.

\begin{theorem}
\label{thm:PivotFanSimplex}
For any full dimensional simplex~$\simplex \subset \R^m$ and any generic direction $\b c \in \R^m$, the slope map~$\SlopeMap$ is a piecewise linear isomorphism from the pivot fan $\PivotFan[\simplex]$ to the sylvester fan~$\LodayFan$.
\end{theorem}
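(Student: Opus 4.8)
The plan is to combine the pieces already assembled in this section. The key structural facts are: (i) by \cref{lem:piecewiseLinear} and \cref{lem:injective1}, the slope map~$\SlopeMap$ is continuous, and linear and injective on each maximal cone of the pivot fan~$\PivotFan[\simplex]$; (ii) by \cref{lem:sylvester1}, two generic weights~$\b\omega,\b\omega'$ have the same arborescence if and only if~$\pi^{\b\omega}\equiv_{\mathrm{sylv}}\pi^{\b\omega'}$; and (iii) the image of a generic~$\b\omega$ under~$\SlopeMap$ lands in the braid region~$C(\pi^{\b\omega})$ by the very definition of~$\pi^{\b\omega}$. So the combinatorics on both sides — arborescences on the pivot side, sylvester classes on the associahedron side — are identified, and the map is a candidate piecewise linear isomorphism. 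What remains is to upgrade this dictionary into a genuine isomorphism of fans: one must show that~$\SlopeMap$ maps each maximal cone of~$\PivotFan[\simplex]$ \emph{onto} the corresponding maximal cone of~$\LodayFan$, and that it is a bijection of~$\R^m$.

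First I would fix a generic~$\b\omega$, let~$K$ be the maximal cone of~$\PivotFan[\simplex]$ containing it (the closure of the fiber of~$\c A^{\b\omega}$), and let~$L$ be the maximal cone of~$\LodayFan$ that is the union of the braid regions~$C(\pi)$ for~$\pi\equiv_{\mathrm{sylv}}\pi^{\b\omega}$. By (ii) and (iii), the relative interior of~$K$ maps into the relative interior of~$L$, so by continuity~$\SlopeMap(K)\subseteq L$. The heart of the proof is the reverse inclusion. Here I would use dimension count and a degree/relative-openness argument: since~$\simplex$ is full dimensional in~$\R^m$, the arborescence~$\c A^{\b\omega}$ is a spanning tree of the complete graph~$K_n$ on~$\{1,\dots,n\}$, the edge vectors~$\{\b u_i-\b u_{\c A^{\b\omega}(i)}\}_{i\in[m]}$ span~$\R^m$, so the linear map~$\SlopeMap|_K$ is an injective linear map~$\R^m\to\R^m$, hence a linear isomorphism. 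A linear isomorphism sends the full-dimensional cone~$K$ to a full-dimensional cone~$\SlopeMap(K)$ of the same dimension, and it is an open map, so~$\SlopeMap(\operatorname{relint} K)$ is open; being contained in~$L$ and mapping onto a set of the same dimension as~$L$, together with the fact that~$\SlopeMap(K)$ is closed (image of a cone under a linear iso) and convex, one concludes~$\SlopeMap(K)=L$. Globally, the cones~$K$ tile~$\R^m$ and the cones~$L$ tile~$\R^m$; since distinct~$K$'s have distinct arborescences mapping to distinct sylvester classes hence to distinct~$L$'s, and adjacent~$K$'s map to adjacent or equal~$L$'s, the induced map on the set of maximal cones is a bijection, and~$\SlopeMap$ glues to a piecewise linear homeomorphism~$\R^m\to\R^m$ carrying~$\PivotFan[\simplex]$ onto~$\LodayFan$, i.e. a piecewise linear isomorphism of fans.

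I expect the main obstacle to be the passage from ``$\SlopeMap$ is injective and linear on each cone with the right combinatorial behavior'' to ``$\SlopeMap$ is a global homeomorphism of~$\R^m$ restricting to a linear isomorphism on each cone and carrying~$\PivotFan[\simplex]$ bijectively onto~$\LodayFan$.'' Concretely, the delicate point is ruling out that~$\SlopeMap$ folds two distinct maximal cones of~$\PivotFan[\simplex]$ onto overlapping images, or that the image~$\SlopeMap(K)$ is a proper subcone of~$L$. The cleanest way around this is to invoke that both~$\PivotFan[\simplex]$ and~$\LodayFan$ are complete fans with the same number of maximal cones (namely the number of sylvester classes, equivalently the Catalan number~$C_m$, matching the known vertex count of the pivot polytope of the simplex), so that the combinatorial bijection between maximal cones established via \cref{lem:sylvester1} forces the geometric map to be surjective cone-by-cone. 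One then checks compatibility on shared walls: if~$K,K'$ are adjacent maximal cones of~$\PivotFan[\simplex]$ sharing a wall~$W$, the continuity of~$\SlopeMap$ forces~$\SlopeMap|_K$ and~$\SlopeMap|_{K'}$ to agree on~$W$, and since~$\SlopeMap(K)=L$, $\SlopeMap(K')=L'$ with~$L,L'$ adjacent (or equal) cones of~$\LodayFan$, the wall~$\SlopeMap(W)$ is exactly the shared wall~$L\cap L'$; iterating over the adjacency graph, which is connected, pins down~$\SlopeMap$ as the claimed piecewise linear isomorphism.
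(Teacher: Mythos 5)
Your plan matches the paper's in outline, but the crucial step---showing that each image $\SlopeMap(K)$ \emph{equals} (not merely sits inside) the sylvester cone $L$ containing it---is not closed by either of the two routes you offer. The openness/closedness/dimension argument fails on its face: a full-dimensional, closed, convex cone can be a proper subset of another full-dimensional, closed, convex cone of the same dimension (\eg $\set{(x,y)}{x\ge 0,\; y\ge x}\subsetneq\set{(x,y)}{x\ge 0,\; y\ge 0}$ in $\R^2$), so same dimension, openness of the interior image, closedness, and convexity do not force equality. The Catalan-count fallback is also insufficient: even granting that \cref{lem:sylvester1} induces a bijection between the $C_m$ maximal cones on each side, this does not rule out that every $\SlopeMap(K)$ is a proper subcone of its $L$ and that the images fail to cover $\R^m$ (a priori $\SlopeMap$ need not be surjective). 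Moreover, importing the vertex count of the pivot polytope of a simplex makes the proof lean on the very external result the theorem is meant to refine.

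What is missing is a \emph{covering} argument, and that is exactly what the paper supplies. Since distinct images $\SlopeMap(K)$ land in distinct sylvester cones, their interiors are pairwise disjoint; by continuity of $\SlopeMap$, two adjacent maximal cones of $\PivotFan[\simplex]$ are sent to full-dimensional cones sharing a facet. Together these facts force the collection of images $\SlopeMap(K)$, over all maximal cones $K$ of $\PivotFan[\simplex]$, to be a complete fan of $\R^m$. Once that is known, each maximal cone of this fan sits in a distinct maximal cone of the complete fan $\LodayFan$, so the two fans coincide---no Catalan count, convexity argument, or degree theory is needed. You actually observe the adjacency fact in passing and sketch a wall-compatibility check, but you deploy it only \emph{after} having asserted $\SlopeMap(K)=L$, which makes it circular as written; it should come first, as the mechanism establishing that the images cover $\R^m$.
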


\begin{proof}
Note first that $\PivotFan[\simplex]$ and $\LodayFan$ are both complete fans in $\R^m$.
By \cref{lem:injective1}, the map $\SlopeMap$ sends each $m$-dimensional cone of~$\PivotFan[\simplex]$ to an $m$-dimensional cone in~$\R^m$, that we call $\SlopeMap$-cone in this proof.
By~\cref{lem:sylvester1}, each $\SlopeMap$-cone is contained in some maximal cone of $\LodayFan$, and distinct $\SlopeMap$-cones are contained in distinct maximal cones of~$\LodayFan$.
In particular, the interior of the $\SlopeMap$-cones are disjoint.
By continuity of~$\SlopeMap$, two adjacent maximal cones of~$\PivotFan[\simplex]$ are thus sent to adjacent $\SlopeMap$-cones in~$\R^m$.
We thus obtain that the $\SlopeMap$-cones form a complete fan in~$\R^m$.
As each $\SlopeMap$-cone is contained in a cone of~$\LodayFan$, and both are complete fans in $\R^m$, we conclude that they coincide.
\end{proof}

\begin{remark}
\label{rem:bijection}
Recall that the sylvester fan has a cone~$C(T) \eqdef \! \set{x \in \R^m}{x_i \le x_j \text{ for all } i \!\to\! j \text{ in } T}$ for each binary tree~$T$ with $m$ internal nodes (where~$T$ is labeled in in-order and oriented away from its root).
The slope map thus induces a bijection from the arborescences~$\set{\c A^{\b\omega}}{\b\omega \in \R^m}$ on~$(\simplex, \b c)$ to the binary trees with $m$ internal nodes.
This is the classical bijection from non-crossing arborescences to binary trees~\cite[Sect.~2, Item~21]{Stanley-CatalanNumbers}.
\end{remark}

\section{Pivot fan of a product of simplices}
\label{sec:productSimplices}

In this section, we prove that the pivot polytope of a product of simplices~$\simplex_{m_1} \times \dots \times \simplex_{m_t}$ is combinatorially equivalent to the shuffle of associahedra~$\Asso[m_1] \star \dots \star \Asso[m_t]$.
Again, we provide an explicit piecewise linear map from the pivot fan of~$\simplex_{m_1} \times \dots \times \simplex_{m_t}$ to the normal fan of~${\Asso[m_1] \star \dots \star \Asso[m_t]}$.

\subsection{$(m_1, \dots, m_t)$-sylvester fan and shuffle of associahedra}

First, we describe here the normal fan of the shuffle~$\Asso[m_1] \star \dots \star \Asso[m_t]$.
We refer to \cite{ChapotonPilaud-shuffle} for the general treatment of shuffle of deformed permutahedra.
We fix $m_1 \ge 1, \dots, m_t \ge 1$ with $t > 0$.
Let~$m \eqdef \sum_{1 \le s \le t} m_s$, and~$M_s \eqdef \sum_{1 \le r \le s} m_r$ for~$0 \le s \le t$ (hence, $M_0 = 0$~and~$M_t = m$).

\begin{definition}
The \defn{$(m_1, \dots, m_t)$-sylvester congruence} is the equivalence relation~$\equiv_\mathrm{sylv}^{m_1, \dots, m_t}$ on permutations of~$[m]$ defined by the transitive closure of the rewriting rule~${UjVikW \equiv_\mathrm{sylv}^{m_1, \dots, m_t} UjVkiW}$ for some letters~${i, j, k}$ of~$[m]$ and some words~$U,V,W$ on~$[m]$, such that~$M_{s-1} < i < j < k \le M_s$ for some~$s \in [t]$.
\end{definition}

\begin{definition}
The \defn{$(m_1, \dots, m_t)$-sylvester fan}~$\LodayFan[m_1, \dots, m_t]$ is the fan of~$\R^m$ whose maximal cones are the unions of the regions~$C(\pi)$ of the braid arrangement corresponding to permutations~$\pi$ of~$[m]$ in the same class of the $(m_1, \dots, m_t)$-sylvester congruence.
\end{definition}

The next statement immediately follow from~\cite[Def.~75 \& Prop.~86]{ChapotonPilaud-shuffle} and \cref{prop:Loday}.

\begin{proposition}
The $(m_1, \dots, m_t)$-sylvester fan~$\LodayFan[m_1, \dots, m_t]$ is the normal fan of the shuffle of associahedra
\[
\Asso[m_1] \star \dots \star \Asso[m_t] \eqdef \bigl( \Asso[m_1] \times \dots \times \Asso[m_r] \bigr) + \sum_{\substack{1 \le r < s \le t \\ i \in [m_r], \, j \in [m_s]}} [\b{e}_{M_{r-1} + i}, \b{e}_{M_{s-1}+j}].
\]
\end{proposition}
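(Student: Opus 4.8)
The plan is to realize both fans as coarsenings of the braid fan of~$\R^m$ and to compare which braid walls each of them keeps. Following~\cite[Def.~75]{ChapotonPilaud-shuffle}, the shuffle is defined in the statement itself as the Minkowski sum of the product $\Asso[m_1] \times \dots \times \Asso[m_t]$, embedded in block coordinates $\R^m = \R^{m_1} \times \dots \times \R^{m_t}$, with the segments $[\b e_a, \b e_b]$ ranging over all pairs $a,b$ lying in distinct blocks $B_s \eqdef \{M_{s-1}+1, \dots, M_s\}$; the latter sum is the graphical zonotope $\gZono = \sum_{ab \in E} [\b e_a, \b e_b]$ of the complete multipartite graph~$G$ with parts $B_1, \dots, B_t$ (whose edge set~$E$ is precisely the set of cross-block pairs). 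Since the normal fan of a Minkowski sum is the common refinement of the normal fans of the summands, it suffices to identify $\LodayFan[m_1, \dots, m_t]$ with the common refinement of $\mathcal N(\Asso[m_1] \times \dots \times \Asso[m_t])$ and $\mathcal N(\gZono)$.

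The key point is that all three fans are coarsenings of the braid fan of~$\R^m$, hence each is determined by the set of braid walls $\{x_a = x_b\}$ it keeps — equivalently, by which pairs of adjacent chambers $C(XabY)$ and $C(XbaY)$ it glues — and the common refinement keeps exactly the walls kept by at least one of the two. On the zonotope side, $\mathcal N(\gZono)$ keeps the wall $\{x_a = x_b\}$ if and only if $ab \in E$, that is, if and only if $a$ and $b$ lie in distinct blocks. On the product side, $\mathcal N(\Asso[m_1] \times \dots \times \Asso[m_t])$ is, in block coordinates, the product of the sylvester fans $\LodayFan[m_s]$ by \cref{prop:Loday}; viewed inside~$\R^m$ it is a coarsening of the braid fan (the braid fan of~$\R^m$ refines the product of the braid fans of the factors, since the relative order of two coordinates in a common block is read off the full order), and it glues $C(XabY)$ and $C(XbaY)$ exactly when transposing $a$ and $b$ changes no block restriction: this always holds when $a,b$ lie in distinct blocks, and when $a,b$ lie in a common block $B_s$ it holds if and only if the restrictions to $B_s$ are sylvester congruent, i.e. (by \cref{def:sylvesterCongruence}) if and only if some letter~$c$ of~$X$ satisfies $\min(a,b) < c < \max(a,b)$.

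Intersecting the two gluing conditions, the common refinement glues $C(XabY)$ and $C(XbaY)$ if and only if $a$ and $b$ lie in a common block $B_s$ and some letter~$c$ of~$X$ satisfies $M_{s-1} < \min(a,b) < c < \max(a,b) \le M_s$. This is exactly the adjacent-transposition form of the rewriting rule defining $\equiv_\mathrm{sylv}^{m_1, \dots, m_t}$, so the common refinement is the $(m_1, \dots, m_t)$-sylvester fan, as desired. Alternatively, the whole statement follows at once by feeding \cref{prop:Loday} into the general formula for the normal fan of a shuffle of deformed permutahedra in~\cite[Prop.~86]{ChapotonPilaud-shuffle}.

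The step that requires the most care is the product-side analysis: checking that $\prod_s \LodayFan[m_s]$, reassembled in~$\R^m$, genuinely is a coarsening of the ambient braid fan, and correctly handling the case in which the transposed letters lie in different blocks, where the separating braid wall is not a wall of the product fan at all. Everything else is a routine translation between Minkowski sums, common refinements of fans, and the rewriting rules for the sylvester congruences.
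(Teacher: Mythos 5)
Your argument is correct, and it gives a genuinely more self-contained proof than the paper's, which disposes of this proposition in a single sentence by invoking \cite[Def.~75 \& Prop.~86]{ChapotonPilaud-shuffle} together with \cref{prop:Loday} --- exactly the shortcut you mention at the end. What your version buys is explicitness: by decomposing the shuffle into the block product $\Asso[m_1] \times \dots \times \Asso[m_t]$ plus the graphical zonotope of the complete multipartite graph on blocks $B_1, \dots, B_t$, and then comparing adjacent-chamber gluings in the common refinement, you verify the cited normal-fan formula rather than quoting it, and you make visible exactly where the two defining clauses of the rewriting rule $UjVikW \equiv^{m_1,\dots,m_t}_\mathrm{sylv} UjVkiW$ come from (the same-block condition $M_{s-1} < i < k \le M_s$ from the zonotope, the presence of $j$ with $i<j<k$ in the prefix from the sylvester congruence within the block). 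One small point of phrasing worth tightening: a coarsening of the braid fan is not in general ``determined by the set of braid walls it keeps,'' since the same hyperplane $\{x_a = x_b\}$ may be a wall along some facets and absorbed along others --- indeed the sylvester fan itself behaves this way --- so the correct invariant is the one you immediately substitute, namely which pairs of adjacent chambers $C(XabY)$, $C(XbaY)$ are glued. You should also note that the prefix letter $c$ with $\min(a,b)<c<\max(a,b)$ automatically lands in the block $B_s$ because the blocks are contiguous intervals; this is what makes ``$c \in X$'' and ``$c \in X \cap B_s$'' interchangeable in your Case~2, and is worth a sentence. (Incidentally, the upper index of the product in the displayed definition should read $\Asso[m_t]$, not $\Asso[m_r]$; you silently use the correct version.)
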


\subsection{Product slope map}

We now consider $t$ generic linear programs $(\polytope{P}_1, \b c_1), \dots, (\polytope{P}_t, \b c_t)$.
For each~$s \in [t]$, the polytope~$\polytope{P}_s$ is $d_s$-dimensional in~$\R^{d_s}$ and has~$n_s \eqdef m_s+1$ vertices ordered according to $\b c_s \in \R^{d_s}$, and we denote by~$\SlopeMap_s : \R^{d_s} \to \R^{m_s}$ the slope map of~$(\polytope{P}_s, \b c_s)$.

We consider the generic linear program~$(\polytope{P}, \b c)$ where~$\polytope{P} \eqdef \polytope{P}_1 \times \dots \times \polytope{P}_t$ and~$\b c \eqdef (\b c_1, \dots, \b c_t)$.
We let~$d \eqdef \sum_{s \in [t]} d_s$ and~$m \eqdef \sum_{s \in [t]} m_s$.
Note that the number of vertices of~$\polytope{P}$ is $\prod_{s \in [t]} n_s = \prod_{s \in [t]} (m_s+1)$, which is different from~$m+1 = \bigl( \sum_{s \in [t]} m_s \bigr) + 1$.

Throughout, we identify $\prod_{s \in [t]} \R^{d_s}$ with~$\R^d$, and similarly $\prod_{s \in [t]} \R^{m_s}$ with~$\R^m$.
Namely, we have
\(
(\b c_1, \dots, \b c_t) = (c_{1,1}, \dots, c_{1,d_1}, c_{2,1}, \dots, c_{2,d_2}, \dots, c_{t,1}, \dots, c_{t,d_t}).
\)
As in \cref{subsec:slopeMap}, our vertex labeling enables us to consider that~$\c A^{\b\omega} : \prod_{s \in [t]} [n_s] \to \prod_{s \in [r]} [n_s]$ for a generic~$\b\omega \in \R^d$.

\begin{lemma}
\label{lem:formulaACompleteGraph2}
For any generic~$\b\omega \eqdef (\b\omega_1, \dots, \b\omega_t) \in \R^d$, we have
\[
\c A^{\b\omega}(i_1, \dots, i_t) = (i_1, \dots, i_{r-1}, \c A^{\b\omega_r}(i_r), i_{r+1} \dots, i_t),
\]
where~$r \in [t]$ is such that~$\tau^{\b\omega_r}(i_r) = \max\set{\tau^{\b\omega_s}(i_s)}{s \in [t]}$.
\end{lemma}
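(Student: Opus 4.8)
The plan is to reduce the computation of~$\c A^{\b\omega}$ on the product to the factorwise slope functions~$\tau^{\b\omega_s}$, exploiting the product structure of the edge set of~$\polytope{P} = \polytope{P}_1 \times \dots \times \polytope{P}_t$. First I would recall that an edge of a product of polytopes is obtained by taking an edge in one factor and a vertex in each of the others; concretely, the neighbors of a vertex~$\b u = (\b u^1_{i_1}, \dots, \b u^t_{i_t})$ in the graph of~$\polytope{P}$ are exactly the vertices~$(\b u^1_{i_1}, \dots, \b u^s_{j}, \dots, \b u^t_{i_t})$ obtained by replacing the $s$-th coordinate~$\b u^s_{i_s}$ by a graph-neighbor~$\b u^s_j$ of~$\b u^s_{i_s}$ in~$\polytope{P}_s$, for some~$s \in [t]$. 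Since~$\b c = (\b c_1, \dots, \b c_t)$ acts coordinatewise, such a neighbor is improving for~$(\polytope{P}, \b c)$ if and only if~$\b u^s_j$ is an improving neighbor of~$\b u^s_{i_s}$ for~$(\polytope{P}_s, \b c_s)$; and for such an edge one computes directly from the definition of~$\rho$ that
\[
\rho^{\b\omega}\bigl( (i_1, \dots, i_s, \dots, i_t), (i_1, \dots, j, \dots, i_t) \bigr) = \frac{\dotprod{\b\omega_s}{\b u^s_j - \b u^s_{i_s}}}{\dotprod{\b c_s}{\b u^s_j - \b u^s_{i_s}}} = \rho^{\b\omega_s}(i_s, j),
\]
because all coordinates outside the $s$-th block cancel in both numerator and denominator.

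Next I would take the maximum over all improving neighbors. Grouping the improving neighbors of~$(i_1, \dots, i_t)$ according to which block~$s$ is modified, and using the displayed identity, we get
\[
\tau^{\b\omega}(i_1, \dots, i_t) = \max_{s \in [t]} \; \max\set{\rho^{\b\omega_s}(i_s, j)}{\b u^s_j \text{ improving neighbor of } \b u^s_{i_s}} = \max_{s \in [t]} \tau^{\b\omega_s}(i_s),
\]
with the convention that a block already at its own $\b c_s$-maximum contributes~$-\infty$ (consistent with~$\tau^{\b\omega_s}(n_s) = -\infty$). Now invoke genericity of~$\b\omega$: the argmax defining~$\c A^{\b\omega}(i_1, \dots, i_t)$ is unique. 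Since each factor program is generic and the factor labelings are fixed, the inner maxima are attained at unique neighbors; and the outer maximum over~$s \in [t]$ is attained at a unique block~$r$ — this is precisely where one needs that the values~$\tau^{\b\omega_s}(i_s)$ are pairwise distinct across blocks, which holds on a dense set of~$\b\omega$ and, more to the point, is forced by the definition of a generic~$\b\omega$ for~$(\polytope{P},\b c)$ (if two blocks tied, the arborescence would not be well defined). Hence the optimal improving neighbor modifies exactly the block~$r$ with~$\tau^{\b\omega_r}(i_r) = \max_s \tau^{\b\omega_s}(i_s)$, and within that block it replaces~$i_r$ by the factorwise arborescence value~$\c A^{\b\omega_r}(i_r)$, giving the claimed formula. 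The edge case where~$(i_1, \dots, i_t)$ is the global $\b c$-maximum~$\b v_{\max}$ of~$\polytope{P}$ happens exactly when every~$i_s = n_s$, so every~$\tau^{\b\omega_s}(i_s) = -\infty$ and~$\c A^{\b\omega}(\b v_{\max}) = \b v_{\max}$ by convention, matching the formula with~$\c A^{\b\omega_r}(n_r) = n_r$.

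The main obstacle, and the only step requiring care, is the bookkeeping around genericity and the~$-\infty$ convention: one must check that the reduction "optimal block = argmax of block slopes" does not break when some blocks are saturated, and that $\b\omega$ generic for~$(\polytope{P},\b c)$ indeed forces each~$\b\omega_s$ generic for~$(\polytope{P}_s,\b c_s)$ together with the pairwise distinctness of the~$\tau^{\b\omega_s}(i_s)$ across~$s$. Everything else — the edge description of a product, the cancellation in~$\rho$, and the interchange of the two maxima — is routine. I do not expect to need any property of simplices here; the statement holds for arbitrary generic factor programs, which is why it is phrased for general~$(\polytope{P}_s, \b c_s)$.
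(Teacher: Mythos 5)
Your proof is correct and follows essentially the same route as the paper's: decompose the edges of the product into the $t$ blocks, observe that $\rho^{\b\omega}$ of a cross-block edge reduces to the corresponding $\rho^{\b\omega_s}$, and then interchange the two maxima. You spell out the genericity bookkeeping (uniqueness of the maximizing block, the $-\infty$ convention, and the $\b v_{\max}$ edge case) that the paper leaves implicit; the only minor imprecision is that genericity of~$\b\omega$ forces a unique argmax among the~$\tau^{\b\omega_s}(i_s)$, not pairwise distinctness of all these values, but this is exactly what your argument actually uses.
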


\begin{proof}
The improving neighbors of~$(i_1, \dots, i_t)$ in~$\polytope{P}_1 \times \dots \times \polytope{P}_t$ are of the form~$(i_1, \dots, j_s, \dots, i_t)$ for some~$s \in [t]$ and some improving neighbor~$j_s$ of~$i_s$ in~$\polytope{P}_s$.
Moreover,
\[
\rho^{\b\omega}\bigl( (i_1, \dots, i_s, \dots i_t), (i_1, \dots, j_s, \dots, i_t) \bigr) = \rho^{\b\omega_s}(i_s, j_s).
\]
Thus, for a fixed~$s \in [t]$, the best improving neighbor~$(i_1, \dots, j_s, \dots, i_t)$ is $(i_1, \dots, \c A^{\b\omega_s}(i_s), \dots, i_t)$, and its slope is~$\tau^{\b\omega_s}(i_s)$.
Hence, the best improving neighbor is~$(i_1, \dots, \c A^{\b\omega_r}(i_r), \dots, i_t)$, for~$r \in [t]$ maximizing~$\tau^{\b\omega_r}(i_r)$.
\end{proof}

\begin{lemma}
\label{lem:projection}
For any generic~$\b\omega, \b\omega' \in \R^d$, if~$\c A^{\b\omega} = \c A^{\b\omega'}$ then~$\c A^{\b\omega_s} = \c A^{\b\omega'_s}$ for all~$s \in [t]$.
\end{lemma}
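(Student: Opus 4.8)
The plan is to read off the factor-wise equalities directly from the coordinatewise description of the product arborescence in \cref{lem:formulaACompleteGraph2}, by probing~$\c A^{\b\omega}$ and~$\c A^{\b\omega'}$ at vertices of~$\polytope{P}$ tailored to isolate one factor at a time. Fix~$s \in [t]$; it suffices to prove that~$\c A^{\b\omega_s}(i) = \c A^{\b\omega'_s}(i)$ for all~$i \in [n_s]$, since~$s$ is arbitrary.

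First I would dispose of the case~$i = n_s$, where both sides equal~$n_s$ by the convention of \cref{def:arborescence}. For~$i \in [m_s]$, I would consider the vertex~$\b v \eqdef (n_1, \dots, n_{s-1}, i, n_{s+1}, \dots, n_t)$ of~$\polytope{P}$ obtained by placing~$i$ in coordinate~$s$ and the~$\b c_r$-maximal vertex~$n_r$ in every other coordinate~$r \neq s$. Since~$i \in [m_s]$, the vertex~$i$ of~$\polytope{P}_s$ has an improving neighbor, so~$\tau^{\b\omega_s}(i)$ and~$\tau^{\b\omega'_s}(i)$ are finite; whereas~$\tau^{\b\omega_r}(n_r) = \tau^{\b\omega'_r}(n_r) = -\infty$ for all~$r \neq s$ by the same convention. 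Hence, in the notation of \cref{lem:formulaACompleteGraph2}, the index realizing the maximal slope at~$\b v$ is~$s$, for both~$\b\omega$ and~$\b\omega'$. Applying \cref{lem:formulaACompleteGraph2} then gives
\[
\c A^{\b\omega}(\b v) = (n_1, \dots, n_{s-1}, \c A^{\b\omega_s}(i), n_{s+1}, \dots, n_t)
\qquad\text{and}\qquad
\c A^{\b\omega'}(\b v) = (n_1, \dots, n_{s-1}, \c A^{\b\omega'_s}(i), n_{s+1}, \dots, n_t),
\]
and comparing the~$s$-th coordinates, using the hypothesis~$\c A^{\b\omega} = \c A^{\b\omega'}$, yields~$\c A^{\b\omega_s}(i) = \c A^{\b\omega'_s}(i)$. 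As~$i \in [m_s]$ was arbitrary we get~$\c A^{\b\omega_s} = \c A^{\b\omega'_s}$, and as~$s \in [t]$ was arbitrary the lemma follows.

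I do not expect a genuine obstacle: the content is entirely carried by \cref{lem:formulaACompleteGraph2}. The one point to be careful about is that the maximizing index in that lemma must be forced to be the \emph{same} index~$s$ for~$\b\omega$ and for~$\b\omega'$ at the chosen probe vertex — and this is exactly why the other coordinates of~$\b v$ are set to the~$\b c_r$-maximal vertices~$n_r$, whose slope~$-\infty$ lies strictly below every finite slope. (That each~$\b\omega_s$ is itself generic, so that~$\c A^{\b\omega_s}$ is well defined, is implicit already in the statement of \cref{lem:formulaACompleteGraph2} and follows from genericity of~$\b\omega$ by a similar probing argument.)
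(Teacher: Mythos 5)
Your proof is correct and uses exactly the paper's strategy: probe $\c A^{\b\omega}$ at the vertices $(n_1, \dots, n_{s-1}, i, n_{s+1}, \dots, n_t)$ and read off $\c A^{\b\omega_s}(i)$ via \cref{lem:formulaACompleteGraph2}. You just spell out more carefully why the maximizing index in \cref{lem:formulaACompleteGraph2} is forced to be $s$ at these probe vertices (the $-\infty$ slopes at the $n_r$'s), which the paper leaves implicit.
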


\begin{proof}
From \cref{lem:formulaACompleteGraph2}, we obtain that
\[
\c A^{\b\omega}(n_1, \dots, n_{s-1},\, i_s,\, n_{s+1}, \dots, n_t) = (n_1, \dots, n_{s-1},\, \c A^{\b\omega_s}(i_s),\, n_{s+1}, \dots, n_t).
\]
Hence, $\c A^{\b\omega}$ indeed determines~$\c A^{\b\omega_s}$ for all~$s \in [t]$.
\end{proof}

\begin{definition}
Define the \defn{product slope map}~$\ProductSlopeMap \!:\! \R^d \to \R^m$ by
\(
\ProductSlopeMap(\b\omega_1, \dots, \b\omega_t) \! \eqdef \! \bigl(\SlopeMap_1(\b\omega_1), \dots, \SlopeMap_t(\b\omega_t)\bigr).
\)
As explained above, this is identified with the concatenation of the vectors~$\SlopeMap_1(\b\omega_1), \dots, \SlopeMap_t(\b\omega_t)$.
\end{definition}

\begin{lemma}
\label{lem:injective2}
On each maximal cone of the pivot fan~$\PivotFan$, the product slope map~$\ProductSlopeMap$ is linear and injective.
\end{lemma}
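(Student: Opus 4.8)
The strategy mirrors the proof of \cref{lem:injective1}, but the key point is that although $\polytope{P} = \polytope{P}_1 \times \dots \times \polytope{P}_t$ has many more vertices than $m+1$, the arborescence $\c A^{\b\omega}$ still records enough parallel edges in each factor to recover each $\b\omega_s$ from $\ProductSlopeMap(\b\omega)$. First I would note that linearity is immediate: on a maximal cone of $\PivotFan$ the arborescence $\c A^{\b\omega}$ is constant, hence by \cref{lem:projection} each $\c A^{\b\omega_s}$ is constant there, so by \cref{lem:piecewiseLinear} (applied to each factor, or equivalently to the coordinates of $\SlopeMap_s$) each component $\tau^{\b\omega_s}(i)$ is linear in $\b\omega$, and therefore so is $\ProductSlopeMap$.

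For injectivity, suppose $\b\omega, \b\omega' \in \R^d$ lie in the relative interior of the same maximal cone of $\PivotFan$ (so $\c A^{\b\omega} = \c A^{\b\omega'}$) and $\ProductSlopeMap(\b\omega) = \ProductSlopeMap(\b\omega')$. By definition of the product slope map this means $\SlopeMap_s(\b\omega_s) = \SlopeMap_s(\b\omega'_s)$ for every $s \in [t]$. Now fix $s$. By \cref{lem:projection} we have $\c A^{\b\omega_s} = \c A^{\b\omega'_s}$. The situation in the factor $\polytope{P}_s$ is then exactly the hypothesis of the injectivity argument already used in \cref{lem:injective1}: all edges $\b u_i^{(s)} \b u_{\c A^{\b\omega_s}(i)}^{(s)}$ of $\polytope{P}_s$ are orthogonal to $\b\omega_s - \b\omega'_s$, and since $(\polytope{P}_s, \b c_s)$ is generic the arborescence $\c A^{\b\omega_s}$ is a spanning tree of the (complete) graph of $\polytope{P}_s$, whose edge vectors span $\R^{d_s}$ because $\polytope{P}_s$ is full dimensional. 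Hence $\b\omega_s = \b\omega'_s$. As this holds for all $s$, we conclude $\b\omega = \b\omega'$.

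The one subtlety worth spelling out — and the step I expect to be the only real obstacle — is justifying why $\c A^{\b\omega_s} = \c A^{\b\omega'_s}$ genuinely lets us conclude that the edges of $\polytope{P}_s$ used by that arborescence are orthogonal to $\b\omega_s - \b\omega'_s$. This needs the identity $\tau^{\b\omega_s}(i) = \rho^{\b\omega_s}(i, \c A^{\b\omega_s}(i))$ together with the fact that $\SlopeMap_s(\b\omega_s) = \SlopeMap_s(\b\omega'_s)$ forces $\rho^{\b\omega_s}(i, \c A^{\b\omega_s}(i)) = \rho^{\b\omega'_s}(i, \c A^{\b\omega'_s}(i)) = \rho^{\b\omega'_s}(i, \c A^{\b\omega_s}(i))$ for all $i \in [m_s]$; since $\b\omega_s \mapsto \rho^{\b\omega_s}(i,j)$ is linear and $\rho^{\b\omega_s}(i,j) - \rho^{\b\omega'_s}(i,j) = \dotprod{\b\omega_s - \b\omega'_s}{\b u_j^{(s)} - \b u_i^{(s)}}/\dotprod{\b c_s}{\b u_j^{(s)} - \b u_i^{(s)}}$, vanishing of the left side is exactly orthogonality of $\b u_i^{(s)}\b u_{\c A^{\b\omega_s}(i)}^{(s)}$ to $\b\omega_s - \b\omega'_s$. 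In fact, rather than re-deriving this I would simply invoke \cref{lem:injective1} applied to $(\polytope{P}_s, \b c_s)$: it already states that $\SlopeMap_s$ is injective on each maximal cone of $\PivotFan[\polytope{P}_s]$, and \cref{lem:projection} guarantees that $\b\omega_s$ and $\b\omega'_s$ lie in a common such cone, so $\SlopeMap_s(\b\omega_s) = \SlopeMap_s(\b\omega'_s)$ immediately gives $\b\omega_s = \b\omega'_s$. This reduces the whole lemma to a one-line combination of \cref{lem:piecewiseLinear}, \cref{lem:projection}, and \cref{lem:injective1}.
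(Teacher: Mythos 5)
Your proposal is correct and, in its final reduction, takes exactly the same route as the paper: apply \cref{lem:projection} to see that $\b\omega_s$ and $\b\omega'_s$ lie in a common maximal cone of $\PivotFan[\polytope{P}_s][\b c_s]$, then invoke \cref{lem:injective1} on each factor to conclude that $\ProductSlopeMap = (\SlopeMap_1, \dots, \SlopeMap_t)$ is linear and injective. The extra unwinding of the orthogonality argument you include is accurate but redundant once you cite \cref{lem:injective1}, which is precisely the shortcut the paper's one-line proof uses.
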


\begin{proof}
If~$\b\omega$ and~$\b\omega'$ lie in the same maximal cone of~$\PivotFan$, then~$\b\omega_s$ and~$\b\omega'_s$ lie in the same maximal cone of~$\PivotFan[\polytope{P}_s][\b c_s]$ by \Cref{lem:projection}.
Hence, $\ProductSlopeMap$ is linear injective since each~$\SlopeMap_s$ is linear injective.
\end{proof}

For a generic~$\b\omega \in \R^d$, we define~\defn{$\pi^{\b\omega}$} as the permutation of~$[m]$ such that $\ProductSlopeMap(\b\omega)_{\pi^{\b\omega}_1} < \dots < \ProductSlopeMap(\b\omega)_{\pi^{\b\omega}_m}$.

\begin{lemma}
\label{lem:sylvester2}
If the graph of~$\polytope{P}_s$ is complete for each~$s \in [t]$, then for any two generic~$\b\omega, \b\omega' \in \R^d$, we have~$\c A^{\b\omega} = \c A^{\b\omega'}$ if and only if~${\pi^{\b\omega} \equiv_\mathrm{sylv}^{m_1, \dots, m_t} \pi^{\b\omega'}}$.
\end{lemma}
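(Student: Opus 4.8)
The plan is to mimic the proof of \cref{lem:sylvester1}, reducing to the case where $\pi^{\b\omega}$ and $\pi^{\b\omega'}$ are adjacent permutations (the general case following by induction on the distance between them), so $\pi^{\b\omega} = XikY$ and $\pi^{\b\omega'} = XkiY$ for letters $1 \le i < k \le m$ and words $X,Y$ on $[m]$. Since each $\b\omega_s$ lies in a maximal cone of $\PivotFan[\polytope{P}_s][\b c_s]$, the functions $\tau^{\b\omega_s}$ are each strictly increasing along $\pi^{\b\omega}$ restricted to the block $\{M_{s-1}+1,\dots,M_s\}$; but the interleaving of the different blocks in $\pi^{\b\omega}$ is governed only by the numerical values of the $\tau^{\b\omega_s}(i_s)$. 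First I would record the key formula: by \cref{lem:formulaACompleteGraph2}, $\c A^{\b\omega}$ is determined by the data consisting of each $\c A^{\b\omega_s}$ together with, for each tuple $(i_1,\dots,i_t)$, the index $r$ maximizing $\tau^{\b\omega_r}(i_r)$ — and this last piece of information is exactly the relative order of the values $\ProductSlopeMap(\b\omega)_{i_1},\dots,\ProductSlopeMap(\b\omega)_{i_t}$, hence is read off from $\pi^{\b\omega}$.

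For the forward direction, suppose $\c A^{\b\omega}=\c A^{\b\omega'}$. By \cref{lem:projection}, $\c A^{\b\omega_s}=\c A^{\b\omega'_s}$ for all $s$, so by \cref{lem:sylvester1} each $\pi^{\b\omega}$ and $\pi^{\b\omega'}$ agree up to the ordinary sylvester congruence within each block. It remains to see that the cross-block interleaving is the same, i.e.\ that no simple transposition $XikY \to XkiY$ with $i,k$ in \emph{different} blocks can change the arborescence. For such a transposition, \cref{lem:formulaACompleteGraph2} shows $\c A^{\b\omega}(p)=\c A^{\b\omega'}(p)$ for all tuples $p$ not involving a coordinate equal to $i$ in its block, and for the tuples $p$ with $p_s = i$ (where $s$ is the block of $i$): if swapping $i$ and $k$ changes the argmax over $s'\in[t]$ of $\tau^{\b\omega_{s'}}(p_{s'})$, then that argmax must flip between the block of $i$ and the block of $k$, which in turn forces $\c A^{\b\omega}$ and $\c A^{\b\omega'}$ to differ on $p$ — contradicting the hypothesis. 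So if $\c A^{\b\omega}=\c A^{\b\omega'}$, then all the transpositions realizing a geodesic from $\pi^{\b\omega}$ to $\pi^{\b\omega'}$ are within-block sylvester moves, giving $\pi^{\b\omega}\equiv_\mathrm{sylv}^{m_1,\dots,m_t}\pi^{\b\omega'}$.

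For the converse, it suffices to treat a single generating move $UjVikW \equiv_\mathrm{sylv}^{m_1,\dots,m_t} UjVkiW$ with $M_{s-1} < i < j < k \le M_s$. Here $i,j,k$ all lie in block $s$, so $\tau^{\b\omega_s}$ and $\tau^{\b\omega'_s}$ are related exactly as in the single-simplex case, and by \cref{lem:sylvester1} (applied to $\polytope{P}_s$) we get $\c A^{\b\omega_s}=\c A^{\b\omega'_s}$ since there is a letter $j$ with $i<j<k$ sitting before the swapped pair. For every other block $s'$, the restriction of $\pi^{\b\omega}$ and $\pi^{\b\omega'}$ to block $s'$ is unchanged, so $\c A^{\b\omega_{s'}}=\c A^{\b\omega'_{s'}}$ trivially. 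Finally, the relative order of the values $\ProductSlopeMap(\b\omega)_{p_1},\dots,\ProductSlopeMap(\b\omega)_{p_t}$ is unchanged by the move for every tuple $p$ (the move only permutes values within one block and does not alter which block-value is largest), so by \cref{lem:formulaACompleteGraph2} we conclude $\c A^{\b\omega}=\c A^{\b\omega'}$.

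The main obstacle I expect is the bookkeeping in the forward direction: making precise why a cross-block adjacent transposition that leaves the arborescence unchanged cannot actually occur on a geodesic between $\pi^{\b\omega}$ and $\pi^{\b\omega'}$ — equivalently, that the cross-block interleaving of the $\tau$-values is an invariant of $\c A^{\b\omega}$. The cleanest way to handle this is to argue directly that $\c A^{\b\omega}$ determines, for each tuple $(i_1,\dots,i_t)$, the index $r$ maximizing $\tau^{\b\omega_r}(i_r)$ (read off from $\c A^{\b\omega}(i_1,\dots,i_t)$ via \cref{lem:formulaACompleteGraph2}), hence determines the comparison between $\ProductSlopeMap(\b\omega)_{i_r}$ and $\ProductSlopeMap(\b\omega)_{i_{r'}}$ whenever $i_r, i_{r'}$ lie in different blocks; combined with \cref{lem:sylvester1} for the within-block comparisons, this pins down $\pi^{\b\omega}$ up to $\equiv_\mathrm{sylv}^{m_1,\dots,m_t}$ and makes both directions symmetric.
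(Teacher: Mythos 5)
Your proposal follows essentially the same route as the paper's proof: reduce to adjacent permutations $\pi^{\b\omega}=XikY$, $\pi^{\b\omega'}=XkiY$, then use \cref{lem:formulaACompleteGraph2}, \cref{lem:projection} and \cref{lem:sylvester1}, splitting according to whether $i$ and $k$ lie in the same block. The ingredients are all correct. What differs is the organization and the level of precision. The paper's proof handles adjacent permutations by a crisp three-way case split (cross-block; same block, not sylvester-equivalent; same block, sylvester-equivalent), and in the first two cases it \emph{exhibits} a witness tuple --- specifically one with all coordinates equal to $n_q$ except one or two --- at which the two arborescences visibly differ. Your forward direction instead argues by contradiction through geodesics, which is logically fine but has two rough edges: the sentence ``it remains to see \dots\ that no simple transposition \dots\ can change the arborescence'' says the opposite of what you mean (you want to show every cross-block transposition \emph{does} change the arborescence); and you never produce a concrete tuple where the argmax flips, only asserting that ``if it flips then the arborescences differ,'' which leaves open why a cross-block swap must flip the argmax somewhere --- the paper's choice of tuple $(n_1,\dots,\underline i,\dots,\underline k,\dots,n_t)$ settles this in one line using $\tau^{\b\omega_q}(n_q)=-\infty$.

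A more substantive caveat concerns your closing observation. You note that $\c A^{\b\omega}$ determines each per-block sylvester class (via \cref{lem:projection} and \cref{lem:sylvester1}) together with all cross-block comparisons (via the argmax in \cref{lem:formulaACompleteGraph2}), and you suggest this pins down the $(m_1,\dots,m_t)$-sylvester class of $\pi^{\b\omega}$. That inference is true, but it silently uses the combinatorial fact that the $(m_1,\dots,m_t)$-sylvester class of a permutation is determined by its per-block sylvester classes together with its cross-block interleaving. This is not a tautology from \cref{def:sylvesterCongruence} --- it is precisely the shuffle-congruence characterization from \cite{ChapotonPilaud-shuffle} --- and if you go this route you should either cite it or prove it. The paper's adjacent-transposition case analysis sidesteps this entirely: for an adjacent swap, per-block sylvester plus same cross-block interleaving trivially forces a within-block sylvester move, so no extra lemma is needed. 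In short, your proof is correct in outline, but the cleanest and self-contained version is exactly the paper's three-case argument with explicit witness tuples.
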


\begin{proof}
We assume that~$\pi^{\b\omega}$ and~$\pi^{\b\omega'}$ are adjacent permutations, the general case follows by induction on the distance from~$\pi^{\b\omega}$ to~$\pi^{\b\omega'}$.
Hence~$\pi^{\b\omega} = XikY$ while~${\pi^{\b\omega'} = XkiY}$ for some letters~${1 \le i < k \le m}$ and some words~$X,Y$ on~$[m]$.
Let~$1 \le r \le s \le t$ be such that~$M_{r-1} < i \le M_r$ and~$M_{s-1} < k \le M_s$, and define~$\underline i \eqdef i - M_{r-1}$ and~$\underline k \eqdef k - M_{s-1}$, so that~$\ProductSlopeMap(\b\omega)_i = \tau^{\b\omega_r}(\underline i)$ and~$\ProductSlopeMap(\b\omega)_k = \tau^{\b\omega_s}(\underline k)$.
We now distinguish three cases.

Assume first that~$r < s$.
As~$\tau^{\b\omega_r}(\underline i) = \ProductSlopeMap(\b\omega)_i$, $\tau^{\b\omega_s}(\underline k) = \ProductSlopeMap(\b\omega)_k$ and~$\tau^{\b\omega_q}(n_q) = -\infty$ for all~$q \in [t]$, the fact that $\ProductSlopeMap(\b\omega)_i < \ProductSlopeMap(\b\omega)_k$ while $\ProductSlopeMap(\b\omega')_i > \ProductSlopeMap(\b\omega')_k$ implies by \cref{lem:formulaACompleteGraph2} that
\begin{align*}
\c A^{\b\omega}(n_1, & \dots, n_{r-1},\, \underline i,\, n_{r+1}, \dots, n_{s-1},\, \underline k,\, n_{s+1}, \dots, n_t) \\
& = (n_1, \dots, n_{r-1},\, \underline i,\, n_{r+1}, \dots, n_{s-1},\, \c A^{\b\omega_s}(\underline k),\, n_{s+1}, \dots, n_t) \\
& \ne (n_1, \dots, n_{r-1},\, \c A^{\b\omega_r}(\underline i),\, n_{r+1}, \dots, n_{s-1},\, \underline k,\, n_{s+1}, \dots, n_t) \\
& = \c A^{\b\omega'}(n_1, \dots, n_{r-1},\, \underline i,\, n_{r+1}, \dots, n_{s-1},\, \underline k,\, n_{s+1}, \dots, n_t).
\end{align*}

Assume now that~$r = s$ and~$\pi^{\b\omega_r} \not\equiv_\mathrm{sylv} \pi^{\b\omega'_r}$.
By the proof of \cref{lem:sylvester1}, we get~${\c A^{\b\omega_r}(\underline i) \ne \c A^{\b\omega'_r}(\underline i)}$.
Hence
\begin{multline*}
\c A^{\b\omega}(n_1, \dots, n_{r-1},\, \underline i,\, n_{r+1}, \dots, n_t)
= (n_1, \dots, n_{r-1},\, \c A^{\b\omega_r}(\underline i),\, n_{r+1}, \dots, n_t) \\
\ne (n_1, \dots, n_{r-1},\, \c A^{\b\omega'_r}(\underline i),\, n_{r+1}, \dots, n_t)
= \c A^{\b\omega}(n_1, \dots, n_{r-1},\, \underline i,\, n_{r+1}, \dots, n_t)
\end{multline*}

Assume finally that~$r = s$ and~$\pi^{\b\omega_r} \equiv_\mathrm{sylv} \pi^{\b\omega'_r}$.
Then for any $(i_1, \dots, i_t)$, the quantities~$\tau^{\b\omega_q}(i_q)$ and~$\tau^{\b\omega'_q}(i_q)$ are maximized for the same~$q \in [t]$.
Thus
\[
\c A^{\b\omega}(i_1, \dots, i_t) = (i_1, \dots, \c A^{\b\omega_q}(i_q), \dots, i_t)
\qquad\text{and}\qquad
\c A^{\b\omega'}(i_1, \dots, i_t) = (i_1, \dots, \c A^{\b\omega'_q}(i_q), \dots, i_t).
\]
Hence, $\c A^{\b\omega} = \c A^{\b\omega'}$ if and only if~$\c A^{\b\omega_q} = \c A^{\b\omega'_q}$.
If~$q \ne r = s$, then~$\c A^{\b\omega_q} = \c A^{\b\omega'_q}$ since $\pi^{\b\omega_q} = \pi^{\b\omega'_q}$.
If~$q = r = s$, then~$\c A^{\b\omega_q} = \c A^{\b\omega'_q}$ by \cref{lem:sylvester1} since $\pi^{\b\omega_r} \equiv_\mathrm{sylv} \pi^{\b\omega'_r}$.

We conclude that
\(
\c A^{\b\omega} = \c A^{\b\omega'}
\;\;\iff\;\;
r = s \text{ and } \pi^{\b\omega_r} \equiv_\mathrm{sylv} \pi^{\b\omega'_r}
\;\;\iff\;\;
\pi^{\b\omega} \equiv_\mathrm{sylv}^{m_1, \dots, m_t} \pi^{\b\omega'}
\).
\end{proof}

\subsection{Pivot fan of a product of simplices}

We are now ready to show that the product slope map sends the pivot fan of~$\simplex_{m_1} \times \dots \times \simplex_{m_t}$ to the $(m_1, \dots, m_t)$-sylvester fan~$\LodayFan[m_1, \dots, m_t]$, which refines \cref{conj:main}.

\begin{theorem}
\label{thm:PivotFanProductSimplices}
For any full dimensional simplices $\simplex_1 \subset \R^{m_1}, \dots, \simplex_t \subset \R^{m_t}$ and any generic direction~$\b c \in \R^{m_1 + \dots + m_t}$, the product slope map $\ProductSlopeMap$ is a piecewise linear isomorphism from the pivot fan~$\PivotFan[\simplex_1\times\dots\times\simplex_t]$ to the $(m_1, \dots, m_t)$-sylvester fan~$\LodayFan[m_1, \dots, m_t]$.
\end{theorem}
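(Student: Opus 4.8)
The plan is to run the argument of the proof of \cref{thm:PivotFanSimplex} in the setting of a product, replacing \cref{lem:injective1,lem:sylvester1} by their product analogues \cref{lem:injective2,lem:sylvester2}.

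First I would record that the ambient dimensions match: since each~$\simplex_s$ is a full dimensional simplex in~$\R^{m_s}$, it has~$m_s+1 = n_s$ vertices and~$d_s = m_s$, whence~${d = \sum_{s \in [t]} d_s = \sum_{s \in [t]} m_s = m}$. So both the pivot fan~$\PivotFan[\simplex_1\times\dots\times\simplex_t]$ and the~$(m_1, \dots, m_t)$-sylvester fan~$\LodayFan[m_1, \dots, m_t]$ are complete fans in~$\R^m$, and~$\ProductSlopeMap$ is a continuous map~$\R^m \to \R^m$ (being a tuple of the continuous maps~$\SlopeMap_s$, as in the proof of \cref{lem:piecewiseLinear}). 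Moreover the graph of each simplex~$\simplex_s$ is complete, so \cref{lem:sylvester2} applies.

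Next I would transport maximal cones. By \cref{lem:injective2}, on each maximal (hence~$m$-dimensional) cone~$\sigma$ of~$\PivotFan[\simplex_1\times\dots\times\simplex_t]$ the map~$\ProductSlopeMap$ is linear and injective, so~$\ProductSlopeMap(\sigma)$ is an~$m$-dimensional polyhedral cone; call these the~$\ProductSlopeMap$-cones. For generic~$\b\omega$, the point~$\ProductSlopeMap(\b\omega)$ lies in the braid region~$C(\pi^{\b\omega})$, hence in the maximal cone of~$\LodayFan[m_1, \dots, m_t]$ indexed by the~$(m_1, \dots, m_t)$-sylvester class of~$\pi^{\b\omega}$; together with \cref{lem:sylvester2} this shows that each~$\ProductSlopeMap$-cone lies in a single maximal cone of~$\LodayFan[m_1, \dots, m_t]$ and that distinct~$\ProductSlopeMap$-cones lie in distinct maximal cones of~$\LodayFan[m_1, \dots, m_t]$. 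In particular the~$\ProductSlopeMap$-cones have pairwise disjoint interiors.

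The last step, which I expect to be the main obstacle, is to show that the~$\ProductSlopeMap$-cones assemble into a complete fan of~$\R^m$. This is precisely the topological argument from the proof of \cref{thm:PivotFanSimplex}: continuity of~$\ProductSlopeMap$ forces a facet shared by two adjacent maximal cones of~$\PivotFan[\simplex_1\times\dots\times\simplex_t]$ to be sent to a common~$(m-1)$-dimensional facet of the two corresponding~$\ProductSlopeMap$-cones, and disjointness of their interiors forces these two cones onto opposite sides of the hyperplane they span, so that their union covers a neighborhood of that facet; propagating this along the connected adjacency graph of the complete fan~$\PivotFan[\simplex_1\times\dots\times\simplex_t]$ yields that the~$\ProductSlopeMap$-cones glue into a complete fan of~$\R^m$. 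Since each~$\ProductSlopeMap$-cone lies inside a maximal cone of the complete fan~$\LodayFan[m_1, \dots, m_t]$, and both collections are complete fans of~$\R^m$, they coincide. Hence~$\ProductSlopeMap$ restricts to a linear isomorphism from each maximal cone of~$\PivotFan[\simplex_1\times\dots\times\simplex_t]$ onto a maximal cone of~$\LodayFan[m_1, \dots, m_t]$, so it is a piecewise linear isomorphism of fans, as claimed.
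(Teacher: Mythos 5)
Your proposal is correct and matches the paper's proof essentially verbatim: the paper simply says to repeat the argument of \cref{thm:PivotFanSimplex} with \cref{lem:injective2} replacing \cref{lem:injective1} and \cref{lem:sylvester2} replacing \cref{lem:sylvester1}, which is exactly the substitution you carried out. Your extra detail (unwinding the dimension count and the continuity/gluing step) is just a fleshing-out of the same argument, not a different route.
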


\begin{proof}
Same argument as in the proof of \cref{thm:PivotFanSimplex}, using \cref{lem:injective2} instead of \cref{lem:injective1} and \cref{lem:sylvester2} instead of~\cref{lem:sylvester1}.
\end{proof}

\begin{corollary}
For any full dimensional simplices~$\simplex_1 \subset \R^{m_1}, \dots, \simplex_t \subset \R^{m_t}$ and any generic direction~$\b c \in \R^{m_1 + \dots + m_t}$, the product slope map~$\ProductSlopeMap$ sends the pivot fan of~$\simplex_1\times\dots\times\simplex_t$ to the normal fan of
\begin{itemize}
\item the $t$-permutahedron when~$m_1 = \dots = m_t = 1$,
\item the $m_1$-associahedron when~$t = 1$,
\item the $(m_1,m_2)$-constrainahedron when~$t = 2$, see~\cite{BottmanPoliakova},
\item the $(t-1,m_t)$-multiplihedron when~$m_1 = \dots = m_{t-1} = 1$, see \cite[Sect.~3.2]{ChapotonPilaud-shuffle}.
\end{itemize}
\end{corollary}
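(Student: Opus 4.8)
The plan is to deduce the corollary immediately from \cref{thm:PivotFanProductSimplices}. That theorem already asserts that the product slope map~$\ProductSlopeMap$ carries the pivot fan of~$\simplex_1 \times \dots \times \simplex_t$ onto the $(m_1, \dots, m_t)$-sylvester fan~$\LodayFan[m_1, \dots, m_t]$, which by the description of~$\LodayFan[m_1, \dots, m_t]$ above is the normal fan of the shuffle of associahedra~$\Asso[m_1] \star \dots \star \Asso[m_t]$. So all that remains is to match this shuffle with the four classical polytopes under the four stated specializations, which is a matter of unravelling definitions.

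For~$t = 1$ the shuffle is literally~$\Asso[m_1]$, so there is nothing to check. For~$t = 2$, the fact that~$\Asso[m_1] \star \Asso[m_2]$ is the~$(m_1,m_2)$-constrainahedron is precisely the way the constrainahedron is obtained from associahedra, as recalled in the introduction and in~\cite{ChapotonPilaud-shuffle,BottmanPoliakova}. For~$m_1 = \dots = m_{t-1} = 1$ one uses that each~$\Asso[1]$ is a point, so that~$\Asso[1] \star \dots \star \Asso[1] \star \Asso[m_t]$ is the~$(t-1,m_t)$-multiplihedron in the description of~\cite[Sect.~3.2]{ChapotonPilaud-shuffle}. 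Finally, when~$m_1 = \dots = m_t = 1$, we have~$m = t$ and each block~$(M_{s-1},M_s]$ has a single element, so the~$(1,\dots,1)$-sylvester congruence admits no applicable instance of its rewriting rule and is therefore trivial; hence~$\LodayFan[1,\dots,1]$ is the full braid fan of~$\R^t$, which is the normal fan of the~$t$-permutahedron. On the polytope side this amounts to the equality~$\Asso[1] \star \dots \star \Asso[1] = \sum_{1 \le r < s \le t} [\b e_r, \b e_s]$, the graphical zonotope of the complete graph~$K_t$, which is a translate of the~$t$-permutahedron.

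I do not expect any real obstacle here: the statement is a dictionary between the shuffle of associahedra and the classical families (permutahedron, associahedron, constrainahedron, multiplihedron), and every entry of that dictionary is either immediate or already recorded in the cited references. The only point demanding a small verification is the degenerate behaviour of the shuffle operation when some of its factors are points, which is what enters the permutahedron and multiplihedron cases.
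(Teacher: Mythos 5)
Your proposal is correct and takes essentially the same route as the paper, which states this corollary without a separate proof precisely because it follows from \cref{thm:PivotFanProductSimplices} together with the identifications of the shuffle $\Asso[m_1] \star \dots \star \Asso[m_t]$ with the permutahedron, associahedron, constrainahedron and multiplihedron in the cited special cases. Your observation that the $(1,\dots,1)$-sylvester congruence is trivial (giving the full braid fan) and that $\Asso[1]$ is a point (handling the multiplihedron case) is exactly the dictionary the authors have in mind.
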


Note that the first three are proved in~\cite{BlackLutjeharmsSanyal}, while only the vertex count of the last one is proved in~\cite{BlackLutjeharmsSanyal}.

\begin{remark}
Similar to \cref{rem:bijection}, the product slope map induces a bijection from the arborescences~$\set{\c A^{\b\omega}}{\b\omega \in \R^m}$ on~$(\simplex_1 \times \dots \times \simplex_t, \b c)$ to the $(m_1, \dots, m_t)$-cotrees in the sense of~\cite[Sect.~4]{ChapotonPilaud-shuffle} (note that \cite[Sect.~4]{ChapotonPilaud-shuffle} only presents $(m,n)$-cotrees, but the definition extends straightforward to tuples).
\end{remark}

\section{Acknowledgments}

We are grateful to Raman Sanyal for introducing us the fascinating world of pivot polytopes, for sharing his results on pivot polytopes of simplices, for enthusiastically embracing the conjecture on pivot polytopes of products of simplices, and for encouraging us to write this paper.
We also thank him for inviting Germain Poullot in Frankfurt for a research visit on pivot polytopes.
We thank Alexander Black and Raman Sanyal for comments on a preliminary version of this paper.

\cref{conj:main} arose during the workshop ``Combinatorics and Geometry of Convex Polyhedra'' held at the Simons Center for Geometry and Physics in March 2023.
We are grateful to the organizers (Karim Adiprasito, Alexey Glazyrin, Isabella Novik, and Igor Pak) for this inspiring event, and to all participants for the wonderful atmosphere.

Finally, we are grateful to two anonymous referees for various suggestions on the presentation of this paper.

\bibliographystyle{alpha}
\bibliography{pppssa}
\label{sec:biblio}

\end{document}